\providecommand{\noopsort[1]{}}
\numberwithin{equation}{section}
\newtheorem{thm}{Theorem}[section]
\newtheorem{prop}[thm]{Proposition}
\newtheorem{lem}[thm]{Lemma}
\theoremstyle{remark}
\newtheorem{rem}[thm]{Remark}
\newtheorem{hyp}[thm]{Hypothesis}
\newtheorem{example}[thm]{Example}
\theoremstyle{definition}
\newtheorem{defn}[thm]{Definition}
\newcommand{\coloneqq}{\mathrel{\mathop:}=}
\newcommand{\applied}[2]{\langle #1,#2\rangle}
\renewcommand{\Re}{{\rm Re}\,}
\newcommand{\dist}{\mathrm{dist}}
\newcommand{\eps}{\varepsilon}
\newcommand{\lh}{\mathrm{span}}
\newcommand{\one}{\mathbbm{1}}
\newcommand{\weak}{\rightharpoonup}
\newcommand{\form}[3]{\ifthenelse{\equal{#2}{}}{\mbox{$ #1\Big[\, \cdot\,  , \, \cdot\,  \Big]$}}{
\mbox{$ #1\Big[ #2 , #3 \Big]$}}}
\newcommand{\qform}[2]{\ifthenelse{\equal{#2}{}}{\mbox{$ #1\Big[\cdot \Big]$}}{
\mbox{$ #1\Big[ #2 \Big]$}}}
\newcommand{\ip}[2]{\ifthenelse{\equal{#1}{}}{\mbox{$ \Big( \,\cdot\; \vline \; \cdot \, \Big) $}}{
\mbox{$ \Big( #1 \;  \vline \; #2 \Big)$}}}
\newcommand{\norm}[1]{\ifthenelse{\equal{#1}{}}{\mbox{$\|\cdot\|$}}{\mbox{$\| #1 \|$}}}
\newcommand{\betr}[1]{\ifthenelse{\equal{#1}{}}{\mbox{$|\cdot|$}}{\mbox{$| #1 |$}}}
\newcommand{\dual}[2]{\ifthenelse{\equal{#1}{}}{\mbox{$ \langle \,\cdot\; , \; \cdot \, \rangle $}}{
\mbox{$ \left\langle #1   ,  #2 \right\rangle$}}}
\newcommand{\pdual}[2]{\ifthenelse{\equal{#1}{}}{\mbox{$ \Big[ \,\cdot\; , \; \cdot \, \Big] $}}{
\mbox{$ \Big[ #1   ,  #2 \Big]$}}}
\newcommand{\bdual}[2]{\ifthenelse{\equal{#1}{}}{\mbox{$ \Big\langle \,\cdot\; , \; \cdot \, \Big\rangle_* $}}{
\mbox{$ \Big\langle #1 \;  , \; #2 \Big\rangle_*$}}}
\newcommand{\rg}{\mathrm{rg}}
\newcommand{\R}{\mathds{R}}
\newcommand{\N}{\mathds{N}}
\newcommand{\CR}{\mathds{R}}
\newcommand{\CC}{\mathds{C}}
\newcommand{\CN}{\mathds{N}}
\newcommand{\CZ}{\mathds{Z}}
\newcommand{\minus}{\,\mbox{-}\,}
\newcommand{\co}{\mathrm{co}}
\newcommand{\fix}{\mathrm{fix}}
\newcommand{\cL}{\mathscr{L}}
\newcommand{\bx}{\mathbf{x}}
\newcommand{\by}{\mathbf{y}}
\begin{document}

\title{Mean ergodic theorems on norming dual pairs}

\author{Moritz Gerlach}
\author{Markus Kunze}
\address{University of Ulm\\Institute of Applied Analysis\\89069 Ulm\\Germany}
\email{moritz.gerlach@uni-ulm.de, markus.kunze@uni-ulm.de}

\keywords{Mean ergodic theorem, norming dual pair, e-property}
\subjclass[2010]{Primary 47A35; Secondary: 47D03}

\begin{abstract}
We extend the classical mean ergodic theorem to the setting of norming dual pairs.
It turns out that, in general, not all equivalences from the Banach space setting
remain valid in our situation.
However, for Markovian semigroups on the norming dual pair $(C_b(E),\mathscr{M}(E))$
all classical equivalences hold true under an additional assumption which is
slightly weaker than the e-property.
\end{abstract}

\maketitle 

\section{Introduction}

A power-bounded linear operator $T$ on a Banach space $X$ is called \emph{mean ergodic} 
if $\lim_{n\to \infty} A_nx$ exists for 
every $x \in X$. Here $A_n := n^{-1}\sum_{j=1}^{n-1}T^j$ are the Ces\`aro averages of the operator.

The classical mean ergodic theorem (see \cite[\S 2.1 Theorems 1.1 and 1.3]{krengel1985}) 
characterizes mean ergodic operators as follows.

\begin{thm}\label{t.classicerg}
Let $T$ be a power-bounded operator on a Banach space $X$. The following are equivalent.
\begin{enumerate}[(i)]
\item $T$ is mean ergodic.
\item $A_n x$ has a $\sigma(X,X^*)$-cluster point for all $x \in X$.
\item $\fix (T)$ separates $\fix (T^*)$, i.e.\ for all $0\neq x \in \fix (T^*)$ 
there exists a $x^* \in \fix (T^*)$ such that $\dual{x}{x^*}_*\neq 0$.
\item $X= \fix (T) \oplus \overline{\rg}^{\|\cdot\|}(I-T)$.
\end{enumerate}
\end{thm}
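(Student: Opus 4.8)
The plan is to establish the cycle $(iv)\Rightarrow(i)\Rightarrow(ii)\Rightarrow(iv)$ together with $(iv)\Rightarrow(iii)\Rightarrow(iv)$. Throughout, set $C:=\sup_n\|A_n\|<\infty$, and observe that we may as well work with $A_n=n^{-1}\sum_{j=0}^{n-1}T^j$, which differs from the operator in the statement by a term of order $n^{-1}$ in operator norm and hence affects none of the limits below. Two elementary facts for power-bounded $T$ will be used repeatedly: $A_n(I-T)=n^{-1}(I-T^n)\to 0$ in operator norm, and $A_nx=x$ for $x\in\fix(T)$. Together they give $\fix(T)\cap\overline{\rg}(I-T)=\{0\}$, since a vector in the intersection is fixed by every $A_n$ while also being a norm limit of vectors on which $A_n\to 0$. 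Hence the sum $\fix(T)+\overline{\rg}(I-T)$ is always algebraically direct, and as both summands are closed, $(iv)$ is equivalent to $\fix(T)+\overline{\rg}(I-T)=X$.

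For $(iv)\Rightarrow(i)$, write $x=x_0+y$ with $x_0\in\fix(T)$, $y\in\overline{\rg}(I-T)$; then $A_nx_0=x_0$, while $A_ny\to 0$ — first for $y\in\rg(I-T)$ by the telescoping identity, then for $y$ in the closure by a $3\varepsilon$-estimate using the bound $C$. Thus $A_nx\to x_0$. The step $(i)\Rightarrow(ii)$ is trivial. For $(iv)\Rightarrow(iii)$, note $\rg(I-T)^{\perp}=\ker(I-T^*)=\fix(T^*)$, hence $\overline{\rg}(I-T)^{\perp}=\fix(T^*)$; if some $0\neq x^*\in\fix(T^*)$ annihilated all of $\fix(T)$, it would annihilate $\fix(T)\oplus\overline{\rg}(I-T)=X$, which is absurd. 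So $\fix(T)$ separates $\fix(T^*)$.

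For $(ii)\Rightarrow(iv)$, fix $x\in X$ and let $y$ be a $\sigma(X,X^*)$-cluster point of the bounded sequence $(A_nx)_n$; choose a subnet with $A_{n_\alpha}x\rightharpoonup y$ and $n_\alpha\to\infty$. Applying the weakly continuous operator $I-T$ and using $A_{n_\alpha}(I-T)=n_\alpha^{-1}(I-T^{n_\alpha})\to 0$ in norm yields $(I-T)y=0$, i.e.\ $y\in\fix(T)$. On the other hand $x-A_nx=n^{-1}\sum_{j=0}^{n-1}(I-T^j)x\in\rg(I-T)$, so $x-A_{n_\alpha}x\rightharpoonup x-y$ with all terms in the convex set $\rg(I-T)$; by Mazur's theorem $x-y\in\overline{\rg}(I-T)$. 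Therefore $x=y+(x-y)\in\fix(T)+\overline{\rg}(I-T)$, and since $x$ was arbitrary this gives $(iv)$.

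The implication $(iii)\Rightarrow(iv)$ is where I expect the real work to lie. Put $Y:=\fix(T)+\overline{\rg}(I-T)$; the argument for $(iv)\Rightarrow(i)$ shows $A_n$ converges pointwise on $Y$. The key step is that $Y$ is \emph{norm-closed}: if $x_k\to x$ with $x_k\in Y$, then $(A_nx)_n$ is norm-Cauchy, because $\|A_nx-A_nx_k\|\le C\|x-x_k\|$ uniformly in $n$ while $(A_nx_k)_n$ converges for each fixed $k$; hence $A_nx\to z$ for some $z$, and as above $z\in\fix(T)$ and $x-z\in\overline{\rg}(I-T)$ by Mazur, so $x\in Y$. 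With $Y$ closed, if $Y\neq X$ then Hahn–Banach provides $0\neq x^*\in X^*$ vanishing on $Y$; vanishing on $\rg(I-T)$ puts $x^*\in\fix(T^*)$, and vanishing on $\fix(T)$ then contradicts $(iii)$. Hence $Y=X$, i.e.\ $(iv)$ holds. The only subtlety worth flagging is the bookkeeping of norm versus weak closures — every passage between them goes through Mazur's theorem — while the uniform bound $C$ takes care of the rest.
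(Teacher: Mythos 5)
Your proof is correct and complete. Be aware, however, that the paper offers no proof of Theorem \ref{t.classicerg} at all: it is quoted as a classical result with a pointer to \cite{krengel1985}, so there is no in-paper argument to match yours against step by step. What can be said is that your proof is precisely the Banach-space specialization of the machinery the paper develops in Section \ref{s.convergence} for its generalization to norming dual pairs: your observation that weak cluster points of $(A_nx)$ lie in $\fix(T)$ is Lemma \ref{l.1}; your norm-closedness of $\fix(T)+\overline{\rg}^{\|\cdot\|}(I-T)$ via the uniform bound $C$ and a $3\eps$-argument is Lemma \ref{l.norm}; the Hahn--Banach step in your $(iii)\Rightarrow(iv)$ is Lemma \ref{l.direct}; and your appeal to Mazur's theorem to place $x-y$ in the norm closure of $\rg(I-T)$ plays the role of \eqref{eq.rangeconv}, where in the dual-pair setting the closure is taken weakly from the outset so Mazur is not needed. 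It is also worth seeing exactly where your argument uses the full dual: in $(iii)\Rightarrow(iv)$ you separate the proper \emph{norm-closed} subspace $Y=\fix(T)+\overline{\rg}^{\|\cdot\|}(I-T)$ from $X$ by an arbitrary functional in $X^*$, which then automatically lies in $\fix(T^*)$. On a norming dual pair $(X,Y)$ the separating functional must be drawn from $Y\subsetneq X^*$, which requires $\sigma(X,Y)$-closedness rather than norm-closedness of that subspace; this is precisely why $(iii)$ no longer implies $(i)$ in the paper's general setting, as Examples \ref{ex:noconvergence} and \ref{ex.nodecomp} show.
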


There are countless extensions of Theorem \ref{t.classicerg} to more general situations. 
These include weakening the assumption of power-boundedness, considering more general 
semigroups than the discrete semigroup $\{T^j\,:\, j \in \CN\}$, 
considering means other than the Ces\`aro averages and replacing the Banach space $X$ with a locally convex space $(X, \tau)$,
see e.g.\ \cite{eberlein1949, nagel1973, sato1978}.
An overview of these results and further references can be found in~\cite{krengel1985}.
Mean ergodic theorems for semigroups on locally convex spaces with additional assumptions are treated in~\cite{albanese2012, albanese2011}.


Even if the underlying space is a Banach space, it is not always reasonable to expect strong 
convergence of the means with respect to the norm topology.
An important example arises in the study of ergodic properties of Markov processes. 
Here, one works on the Banach space $\mathscr{M}(E)$ 
of bounded measures on the Borel $\sigma$-algebra of a Polish space $E$ or on the 
subset $\mathscr{P}(E)$ of probability measures.

Even though in some exceptional cases one obtains convergence of Ces\`aro averages 
(or even the semigroup itself) in the total variation norm \cite{seidler1997}, 
it is more natural to consider convergence in the weak topology induced by the bounded, continuous functions $C_b(E)$.

Unfortunately, it seems that one cannot treat this situation with a mean ergodic theorem on locally convex 
spaces $(X, \tau)$. The reason for this is that the known results require that the means be equicontinuous
with respect to $\tau$, see \cite{albanese2012,albanese2011,eberlein1949, sato1978}. If $\tau$ is
the weak topology $\sigma (\mathscr{M}(E), C_b(E))$, equicontinuity seems a rather strong assumption
which is not satisfied in interesting examples.

The literature on weak  Ces\`aro-convergence of Markov semigroups is rather extensive. 
Let us mention \cite{kps2010, sw2012,wh2011b, wh2011}.
However, a characterization of mean ergodicity in the spirit of Theorem \ref{t.classicerg} is still missing.
\medskip 

It is the purpose of the present article to fill this gap. We will work in the framework
of norming dual pairs introduced in \cite{kunze2009, kunze2011}
and consider simultaneously two semigroups which are related to each other via duality. 
From the point of view of applications to Markov semigroups this is rather natural, 
as associated with a Markov process there are two semigroups dual to each other. The first
acts on the space of bounded measurable functions on the state space $E$ (or a subspace thereof such as $C_b(E)$) 
and corresponds to the Kolmogorov backward equation and the second acts on the space of bounded measures on $E$ 
and corresponds to the Kolmogorov forward equation (or Fokker-Planck equation).

Throughout, we allow general (in particular also noncommutative) semigroups and means -- even though our main 
interest lies in Ces\`aro averages of one-parameter semigroups in discrete or continuous time -- and study convergence 
of the means in the weak topologies induced by the dual pair.

In our first main result (Theorem \ref{t.erg}), we show that in this general situation the statements corresponding 
to (i) and (ii) in Theorem \ref{t.classicerg} are equivalent and imply the statements corresponding to (iii) and (iv). 
We also provide counterexamples to show that in general (the statements corresponding to) (iii) does not imply (iv) 
and neither (iii) nor (iv) imply (i) and (ii).
\smallskip

Afterwards, we focus on the more special situation of Markovian semigroups on the norming dual pair
$(C_b(E), \mathscr{M}(E))$.
Besides others, our main assumption in this more special situation is a condition which is slightly weaker than 
the \emph{e-property} which played an important role in \cite{kps2010, sw2012}. 
Under that assumption we prove in Theorem \ref{t.eerg} that the statements corresponding 
to (i) -- (iv) in Theorem \ref{t.classicerg} for the semigroup on $\mathscr{M}(E)$ are all equivalent. 
Moreover, if the semigroup on $\mathscr{M}(E)$ is mean ergodic with respect to $\sigma (\mathscr{M}(E), C_b(E))$, 
then also the semigroup on $C_b(E)$ is mean ergodic even with respect 
to a topology finer than $\sigma (C_b(E), \mathscr{M}(E))$, namely the strict topology.
Considering semigroups on $(C_b(E),\mathscr{M}(E))$ rather than on the single Banach space $\mathscr{M}(E)$
makes our assumption natural, in fact, it is necessary for the convergence we obtain.
\medskip 

This article is organized as follows. In Section \ref{s.ndp} we recall some basic definitions and results 
about norming dual pairs. In Section \ref{s.as}, we introduce the notion of an ``average scheme'' which will 
act as our means. Afterwards, we take up our main line of study. 
First, we analyze convergence of average schemes on general norming dual pairs in Section \ref{s.convergence}, 
then the convergence of average schemes on $(C_b(E), \mathscr{M}(E))$ under additional assumptions in 
Section \ref{s.eproperty}. The concluding Section \ref{s.examples} contains our Counterexamples.

\section{Norming dual pairs}\label{s.ndp}

A \emph{norming dual pair} is a triple $(X,Y, \dual{}{})$ where $X$ and $Y$ are Banach spaces and $\dual{}{}$ is a 
duality between $X$ and $Y$ such that
\[
 \|x\| = \sup\{|\dual{x}{y}|: y \in Y,\|y\| \leq 1\}
\quad\mbox{and}\quad \|y\| = \sup\{|\dual{x}{y}|: x \in X,\|y\| \leq 1\} \, .
\]
Identifying $y$ with the linear functional $x \mapsto \dual{x}{y}$, we see that $Y$ is isometrically isomorphic 
with a norm closed subspace of $X^*$, the norm dual of $X$, which is norming for $X$. If the duality paring is understood, we will 
briefly say that $(X,Y)$ is a norming dual pair.\medskip 

Let us give some examples of norming dual pairs. If $X$ is a Banach space with norm dual $X^*$, then $(X,X^*)$ and thus, 
by symmetry, also $(X^*,X)$ is a norming dual pair with respect to the canonical duality $\dual{}{}_*$. 
If $(E, \Sigma)$ is a measurable space, we write $B_b(E)$ for the space of bounded, measurable functions 
on $(E, \Sigma)$ and $\mathscr{M}(E)$ for the space of complex measures on $(E, \Sigma)$. 
The space $B_b(E)$ is endowed with the supremum norm and the space $\mathscr{M}(E)$ is endowed with the 
total variation norm. Then $(B_b(E), \mathscr{M}(E))$ is a norming dual pair with respect to the duality
\[
 \dual{f}{\mu} := \int_E f\, d\mu\, .
\]
If $E$ is a Polish space, i.e.\ a topological space which is metrizable through a complete, separable 
metric, and $\Sigma$ is the Borel $\sigma$-algebra, then also $(C_b(E), \mathscr{M}(E))$ is a norming dual pair.
For the easy proofs of these facts we refer to \cite[Section 2]{kunze2011}.\medskip 

In what follows, we will be interested in locally convex topologies which are \emph{consistent (with the duality)}. 
We recall that a locally convex topology $\tau$ on $X$ is consistent if $(X,\tau)' = Y$, i.e.\ every $\tau$-continuous 
linear functional $\varphi$ on $X$ is of the form $\varphi (x) = \dual{x}{y}$ for some $y\in Y$. Of particular importance 
are the \emph{weak topologies} $\sigma (X,Y)$ and $\sigma (Y,X)$ associated with the dual pair. To simplify notation, 
in what follows we will write $\sigma$ for the $\sigma (X,Y)$ topology on $X$ and $\sigma'$ for the $\sigma (Y,X)$ topology
on $Y$. We will write $\weak$, resp.\ $\weak'$, to indicate convergence with respect to $\sigma$, resp.\ $\sigma'$.
\medskip

If $\tau$ is a topology on $X$, we write $\cL (X,\tau)$ for the algebra of $\tau$-continuous linear operators 
on $X$. We write $\cL (X)$ shorthand for $\cL (X,\|\cdot\|)$. By \cite[Prop 3.1]{kunze2011}, $\cL (X,\sigma)$ 
is a subalgebra of $\cL (X)$ which is closed in the operator norm. Moreover, identifying $Y$ with a closed 
subspace of $X^*$, an operator $S\in\cL (X)$ belongs to $\cL (X,\sigma)$ if and only if its norm adjoint $S^*$ leaves $Y$
invariant. In that case, the $\sigma$-adjoint of $S$, denoted by $S'$, is precisely $S^*|_Y$ and $\|S\|=\|S'\|$.\medskip 

Let us give a description of the operators in $\cL (X,\sigma)$ in the case where $X= B_b(E)$ or 
$X= C_b(E)$ is in canonical duality with $Y= \mathscr{M}(E)$. 

We recall that a \emph{bounded kernel} 
on a measurable space $(E,\Sigma)$ is a mapping $k: E\times \Sigma \to \CC$ such that (i) $k(x,\cdot)$ is a 
complex measure on $(E, \Sigma)$ for all $x \in E$, (ii) $k(\cdot, A)$ is $\Sigma$-measurable for all $A \in \Sigma$
and (iii) $\sup_{x\in E}|k|(x,E) < \infty$, where $|k|(x,\cdot)$ denotes the total variation of $k(x,\cdot)$.

A linear operator $S$ on a closed subspace $X$ of $B_b(E)$ is called a \emph{kernel operator (on $X$)} if there 
exists a bounded kernel $k$ on $(E,\Sigma)$ such that
\[
 (Sf)(x) = \int_Ef(y)k(x,dy), \quad \mbox{for all}\, f \in X.
\]
Note that if $X$ is $\sigma(B_b(E), \mathscr{M}(E))$-dense in $B_b(E)$, then $k$ is uniquely determined by $S$. In 
this case, $S$ has a unique extension to $B_b(E)$ and its $\sigma$-adjoint is given by
\[
 (S'\mu)(A) = \int_E k(x,A)\, d\mu (x)\quad \forall\, \mu \in \mathscr{M}(E)\, .
\]

It was seen in \cite[Prop 3.5]{kunze2011} that on the norming dual pair $(X,\mathscr{M}(E))$, where
$X=B_b(E)$ or, if $E$ is Polish and $\Sigma$ is the Borel $\sigma$-algebra, $X=C_b(E)$, an operator $S \in \cL (X)$
belongs to $\cL (X,\sigma)$ if and only if it is a kernel operator.

\section{Average Schemes}\label{s.as}

For a family $\mathscr{S}$ of linear operators on a vector space $X$ we denote by
\[ \fix(\mathscr{S}) \coloneqq \bigcap_{S\in\mathscr{S}} \ker(I-S) \]
its \textit{fixed space} and by
\[ \rg(I-\mathscr{S}) \coloneqq \{ x-Sx : x\in X,\, S\in\mathscr{S} \}\]
the range of $I-\mathscr{S}$.
	Moreover, for $x\in X$ we define
	\[ \co (\mathscr{S}x) \coloneqq \biggr\{ \sum_{k=1}^n a_k S_k x : a_k \geq 0,\,
	\sum_{k=1}^n a_k = 1,\, S_k \in \mathscr{S}, \, n\in \CN \biggr\},\]
the  convex hull of the orbit of $x$ under $\mathscr{S}$.
A family $\mathscr{S}$ containing the identity is called
a \emph{semigroup} if $ST\in \mathscr{S}$ for all $S, T\in \mathscr{S}$.

Inspired by \cite{eberlein1949} we make the following definition.

\begin{defn}
\label{d.as}
	Let $(X,Y)$ be a norming dual pair. An \emph{average scheme on $(X,Y)$} is a pair
	$(\mathscr{S},\mathscr{A})$, where $\mathscr{S}\subset\mathscr{L}(X,\sigma)$ is a semigroup
	with adjoint $\mathscr{S}' \coloneqq \{ S' : S\in \mathscr{S}\}$
	and $\mathscr{A}=(A_\alpha)_{\alpha\in\Lambda}\subset \mathscr{L}(X,\sigma)$ is a net of
	$\sigma$-continuous operators such that the following assertions
	are satisfied.
	\begin{enumerate}[(AS2)]
	\item[(AS1)] There exists $M>0$ such that $\Vert A_\alpha \Vert \leq M$ for all $\alpha\in\Lambda$.
	\item[(AS2)] $A_\alpha x \in \overline{\co}^\sigma(\mathscr{S}x)$ 
		and $A'_\alpha y \in \overline{\co}^{\sigma'}(\mathscr{S}'y)$
		for all $\alpha \in \Lambda$, $x\in X$ and $y\in Y$.
	\item[(AS3)] For every $S\in\mathscr{S}$ and all $x\in X$ and $y\in Y$ one has that
		\[ \lim_{\alpha} A_\alpha(S-I)x = \lim_{\alpha} (S-I)A_\alpha x = 0 \]
		and 
		 \[\lim_{\alpha} A'_\alpha(S'-I)y = \lim_{\alpha} (S'-I)A'_\alpha y = 0 \]
		 in the norm topology of $X$ and $Y$ respectively.
	\end{enumerate}
\end{defn}

We should point out that our terminology is somewhat different from that in \cite{eberlein1949}. In the language 
of Eberlein, the net $A_\alpha$ would be called a \emph{system of almost invariant integrals} and a semigroup 
$\mathscr{S}$ possessing such a system would be called \emph{ergodic}.
Moreover, we should note that there is no equicontinuity assumption for the 
averages $A_\alpha$ with respect to $\sigma$ or with respect to any other \emph{consistent} topology. Instead, 
we assume in (AS1) equicontinuity only with respect to the (in general not consistent) norm topology. On the 
other hand, in (AS3), we assume convergence in the norm topology, which is a stronger assumption than $\sigma$-convergence 
(and also than convergence with respect to a consistent topology on $X$.)

\begin{rem}
	\label{r.fixedpoint}
	We will frequently make use of the following observation.
	If $(\mathscr{S},\mathscr{A})$ is an average scheme on a norming dual pair $(X,Y)$ and 
	$x\in \fix(\mathscr{S})$, then $\overline{\co}^\sigma(\mathscr{S}x) = \{ x\}$ and hence, by (AS2),
	$A_\alpha x = x$ for all $\alpha \in \Lambda$.
\end{rem}

We now give some typical examples of average schemes. Throughout, $(X,Y)$ denotes a norming dual pair.

\begin{example}\label{ex.cesaro}
	Let $\mathscr{S} \coloneqq \{ S^k : k\in \CN_0\}$ be a semigroup that consists of powers of a single 
	operator $S\in \mathscr{L}(X,\sigma)$ and denote by
	\[ A_n \coloneqq \frac{1}{n} \sum_{k=0}^{n-1} S^k  \quad (n\in\CN)\]
	its \emph{Ces\`aro averages}. Assume that $\lim_{n\to\infty} \frac{1}{n}S^n x=0$ for all $x\in X$ and
that $\lim_{n\to\infty}\frac{1}{n}(S')^ny =0$ for all $y \in Y$. Moreover, assume that there
	exists $M>0$ such that $\Vert A_n \Vert < M$ for all $n\in\N$, i.e.\ $S$ is \emph{Ces\`aro bounded}. 
	Both assumptions are satisfied if $S$ is \emph{power-bounded}, i.e.\ $\sup_{n\in \CN} \|S^n\| < \infty$.
	
	Clearly, (AS1) and (AS2) are satisfied. As for (AS3), we have
	\[ \lim_{n\to\infty} A_n (S-I)x = \lim_{n\to\infty} \frac{1}{n} (S^n-I)x = 0 \text{ for all }x\in X\]
	and, similarly, $\lim_{n\to\infty} A'_n (S'-I)y  = 0$ for all $y\in Y$.
	Thus,
	 $(\mathscr{S},(A_n)_{n\in\CN})$ is an average scheme.
\end{example}

\begin{example}\label{ex.abel}
We again consider $\mathscr{S} \coloneqq \{ S^k : k\in \CN_0\}$ for an operator $S \in \cL (X, \sigma)$.
If $S$ has spectral radius $r(S) = \lim_{n\to\infty} \|S^n\|^{\frac{1}{n}} \leq 1$,
then for $r \in [0,1)$ the series $\sum_{k=0}^\infty r^kS^k$ converges in 
operator norm and thus represents an element of $\cL (X,\sigma)$. We denote by 
\[ A_rx := (1-r)\sum_{k=0}^\infty r^kS^kx \quad (r \in [0,1))\]
the \emph{Abel averages} of $S$. If $M :=\sup_{0\leq r< 1} \|A_r\| < \infty$, then $S$ is called \emph{Abel bounded}.
Note that power-bounded operators are Abel bounded.

For an Abel bounded operator $S \in \cL (X,\sigma)$,
the pair $(\mathscr{S}, (A_r)_{r\in [0,1)})$ is an average scheme.

Indeed, (AS1) is clear. As for (AS2) we see that $A_r = \lim_{n\to\infty} \frac{1-r}{1-r^{n+1}}\sum_{k=0}^n
r^kS^k$ in operator norm. Hence $A_rx$ belongs even to the norm closure of $\mathrm{co}(\mathscr{S} x)$. For the 
$\sigma$-adjoint, one argues similarly. It remains to verify (AS3). So that end, note that
\[
 \|A_rSx - A_rx\| = (1-r)\|x-A_rSx\| \leq (1-r)\big[1 + M\|S\|\big]\|x\| \to 0
\]
as $r \uparrow 1$. On $Y$, one argues similarly.
\end{example}

\begin{example}
\label{ex.integrable}
Let $\mathscr{S} \coloneqq \{ S(t)\,:\, t \geq 0\} \subset \cL (X,\sigma)$ 
be an integrable semigroup on $(X,Y)$, cf.\ \cite{kunze2011}. This means that $S(0)$ is the identity on $X$ and 
for $t,s \geq 0$, we have $S(t+s) = S(t)S(s)$. Moreover, there exists $M \geq 1$ and $\omega \in \CR$ such that
$\|S(t)\| \leq Me^{\omega t}$ for all $t \geq 0$. Finally, for all $x\in X$ and $y\in Y$ the function $t \mapsto 
\dual{S(t)x}{y}$ is measurable and for some (equivalently, all) $\lambda$ with $\Re\lambda > \omega$ there 
exists an operator $R(\lambda) \in \cL (X,\sigma)$ such that
\[
 \dual{R(\lambda)x}{y} = \int_0^\infty e^{-\lambda t}\dual{S(t)x}{y}\, dy, \quad \mbox{for all}\,\, x \in X, y \in Y.
\] 
It follows from \cite[Thm 5.8]{kunze2011} that if $\mathscr{S}$ is an integrable semigroup, then for every $t >0$
there exists an operator $A_t \in \cL (X,\sigma)$ such that
\[
 \dual{A_tx}{y} = \frac{1}{t}\int_0^t\dual{S(s)x}{y}\, ds\, .
\]
We call the semigroup $\mathscr{S}$ Ces\`aro bounded if $M:=\sup_{t>0}\|A_t\| < \infty$. If $\mathscr{S}$ is an integrable,
Ces\`aro bounded semigroup such that $\frac{1}{t}S(t)x \to 0$ and $\frac{1}{t}S(t)'y \to 0$
as $t\to \infty$ for arbitrary $x \in X$ and $y \in Y$, then $(\mathscr{S}, (A_t)_{t>0})$ is an average scheme.

(AS1) is clear and (AS2) is a consequence of the Hahn-Banach theorem on the locally convex spaces $(X,\sigma)$ resp.\ 
$(Y,\sigma')$, cf.\ the end of the proof of Theorem 4.4 in \cite{kunze2011}. As for (AS3), we note that for $t>0$ and $s\geq 0$
we have
\[
 A_tS(s) - A_t = \frac{s}{t}(I - S(t))A_s = \frac{s}{t}A_s(I-S(t))
\]
as is easy to see using the semigroup law. Consequently, for every $x \in X$ we have
$\|A_tS(s)x - A_tx\| \leq s M(\|x\|t^{-1} + \|t^{-1}S(t)x\|) \to 0$ as $t \to \infty$. On $Y$, one argues similarly.

In particular, $(\mathscr{S},(A_t)_{t >0})$ is an average scheme whenever the integrable semigroup $\mathscr{S}$ is bounded.
\end{example}

Concerning the last example, let us note that if $\mathscr{S}$ is an integrable semigroup, then the operators 
$R(\lambda)$ form a pseudo resolvent, hence, there is a unique, possibly multivalued operator $\mathscr{G}$ with 
$R(\lambda ) = (\lambda -\mathscr{G})^{-1}$, the \emph{generator of $\mathscr{S}$}. 
In this case, as a consequence of \cite[Prop 5.7]{kunze2011}, $\fix (\mathscr{S}) = \mathrm{ker}\mathscr{G} = 
\{ x \in X\,:\, (x,0) \in \mathscr{G}\}$. 

For more information 
about integrable semigroups and their generators, we refer to \cite{kunze2011}.

\section{Convergence of average schemes}\label{s.convergence}

We start with the definition of weak ergodicity.

\begin{defn}
	We say that an average scheme $(\mathscr{S},\mathscr{A})$ on a norming
	dual pair $(X,Y)$ is \emph{weakly ergodic},
 if the $\sigma$-limit of $(A_\alpha x)$ exists for every $x\in X$ and the
$\sigma'$-limit of $(A_\alpha'y)$ exists for every $y\in Y$.
\end{defn}

In the mean ergodic theorem on norming dual pairs we need a slightly stronger version of assertion (ii)
of Theorem~\ref{t.classicerg}. This is due to the fact that the strategy for the proof differs from
the classical one since not all assertions corresponding to (i) -- (iv) are equivalent in our situation.
We use the following terminology.

\begin{defn}
	\label{d.compactnet}
	We say that a net $(x_\alpha)_{\alpha\in\Lambda}$ in a topological space $E$ \emph{clusters} if
	every subnet of $(x_\alpha)$ has a cluster point, i.e.\ it has a convergent subnet.
\end{defn}

A net clusters whenever the set of its elements is relatively compact.
However, if a net $(x_\alpha)_{\alpha\in\Lambda}$ clusters, 
one cannot infer that the set $\{ x_\alpha : \alpha\geq\alpha_0 \}$ is
relatively compact for some $\alpha_0 \in \Lambda$.
For a sequence, these two properties are equivalent, which is probably well-known.
However, we were not able to find a reference and hence present the short proof
for the sake of completeness.

\begin{lem}
\label{l.seqcompact}
	Let $(x_n)$ be a sequence in a  topological vector space $(X,\tau)$ 
	such that every subsequence of $(x_n)$ has a convergent subnet.
	Then the set $\mathscr{M}\coloneqq \{ x_n : n\in\N\}$ is relatively compact.
\end{lem}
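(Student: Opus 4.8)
The plan is to prove that the closure of $\mathscr{M}$ in $X$ is compact, in two steps: first that $\mathscr{M}$ is precompact (totally bounded), and then that no point of its closure ``escapes'' $X$.

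\emph{Step 1: precompactness.} Suppose, for a contradiction, that $\mathscr{M}$ is not precompact. Then there is a symmetric open neighbourhood $W$ of $0$ in $X$ such that no finite union of translates of $W$ covers $\mathscr{M}$. I would build a subsequence $(x_{n_k})$ recursively: given $x_{n_1},\dots,x_{n_k}$, the set $\bigcup_{j\le k}(x_{n_j}+W)$ cannot contain a tail $\{x_m:m>n_k\}$ of the sequence — otherwise $\mathscr{M}$ would be covered by the finitely many translates $x_1+W,\dots,x_{n_k}+W$ (which already cover $x_1,\dots,x_{n_k}$) — so one can pick $n_{k+1}>n_k$ with $x_{n_{k+1}}-x_{n_j}\notin W$ for all $j\le k$, and by symmetry of $W$ one gets $x_{n_k}-x_{n_j}\notin W$ whenever $k\ne j$. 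By hypothesis $(x_{n_k})$ has a subnet converging to some $x_\ast\in X$; choosing a symmetric open neighbourhood $V$ of $0$ with $V+V\subseteq W$, the neighbourhood $x_\ast+V$ of $x_\ast$ then contains $x_{n_{k_1}}$ and $x_{n_{k_2}}$ for two distinct indices $k_1\ne k_2$, whence $x_{n_{k_1}}-x_{n_{k_2}}\in V+V\subseteq W$, contradicting the construction. Hence $\mathscr{M}$ is precompact, and therefore its closure $K$ in the completion $\widehat{X}$ of $X$ is compact.

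\emph{Step 2: back from $\widehat{X}$ to $X$.} It suffices to show $K\subseteq X$, for then the closure of $\mathscr{M}$ in $X$ equals $K\cap X=K$ and is compact. Fix $\xi\in K$. If $\xi=x_m$ for some $m$ there is nothing to prove, so assume $\xi\notin\mathscr{M}$; since $\mathscr{M}$ is dense in $K$ and $X$ (hence $\widehat{X}$) is Hausdorff, every neighbourhood of $\xi$ contains infinitely many $x_n$, so $\xi$ is a cluster point of $(x_n)$ in $\widehat{X}$ and thus the limit of some subnet of $(x_n)$. From this subnet I would extract an honest subsequence $(x_{n_k})$ of $(x_n)$ with $x_{n_k}\to\xi$ in $\widehat{X}$. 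Applying the hypothesis, $(x_{n_k})$ has a subnet converging to some point of $X$; but that subnet also converges to $\xi$ in the Hausdorff space $\widehat{X}$, so the point equals $\xi$, and hence $\xi\in X$.

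\emph{Main obstacle.} The delicate point is precisely the extraction in Step 2: passing from a subnet of $(x_n)$ that converges in the completion to a genuine \emph{subsequence} of $(x_n)$ converging to the same limit — equivalently, showing that every cluster point of $(x_n)$ in $\widehat{X}$ already lies in $X$. This is where the assumption is needed in full strength: it is not enough that $(x_n)$ itself has a convergent subnet, one needs it for \emph{all} subsequences, and this must be combined with the compactness of $K$ to arrange the relevant neighbourhoods of $\xi$ into a sequence along which convergence of a subsequence of $(x_n)$ to $\xi$ can be forced. Everything else — the greedy construction in Step 1 and the bookkeeping around $K=\overline{\mathscr{M}}$ and the identity $\overline{\mathscr{M}}=K\cap X$ — is routine once this core point is settled.
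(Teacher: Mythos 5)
Your Step~1 is, almost word for word, the paper's entire proof: the same greedy construction of a subsequence whose terms are pairwise separated by a fixed neighbourhood $W$ of the origin, followed by the same contradiction obtained from a convergent subnet together with a symmetric (circled) $V$ satisfying $V+V\subseteq W$. That part is correct. The difference is that the paper stops there: having shown $\mathscr{M}$ totally bounded, it concludes relative compactness by citing \cite[\S 5.6(2)]{koethe1969}, and it has no analogue of your Step~2.

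Step~2 is where your proposal has a genuine gap, and you have in fact put your finger on it yourself in the ``main obstacle'' paragraph. The passage from ``$\xi$ is the limit of a subnet of $(x_n)$ in $\widehat{X}$'' to ``some honest subsequence of $(x_n)$ converges to $\xi$'' is not a routine extraction: in a non-metrizable topological vector space a cluster point of a sequence need not be the limit of any subsequence. For example, in $\R^{2^{\N}}$ with the product topology the sequence $x_n=(\one_A(n))_{A\subseteq \N}$ lies in the compact set $\{0,1\}^{2^{\N}}$ and hence has cluster points, but no subsequence $(x_{n_k})$ converges, since the coordinate indexed by $A=\{n_{2k}:k\in\N\}$ oscillates along it. Moreover, even setting the extraction aside, the hypothesis only guarantees that each subsequence has \emph{some} cluster point lying in $X$; nothing in your argument identifies that cluster point with the particular $\xi\in K$ you are trying to capture, so you cannot conclude $\xi\in X$ this way. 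In other words, the step you describe as needing ``the assumption in full strength'' is precisely the step you have not supplied, and along the route you propose it cannot be supplied. As it stands the proposal establishes only precompactness --- which is exactly what the paper proves by hand before delegating the remaining compactness statement to the cited result --- and leaves the second half of the argument open.
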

\begin{proof}
	In view of \cite[\S 5.6(2)]{koethe1969}, it suffices to show that $\mathscr{M}$ is totally bounded.
	Assume the converse. Then there exists an open neighborhood of the origin $U$ and a subsequence 
	$y_k \coloneqq x_{n_k}$ such that
	\[ y_k \not\in \bigcup_{j=1}^{k-1} (U+y_j)\]
	for all $k\in\N$. By assumption, $(y_k)$ contains a convergent subnet 
	$(y_{k(\alpha)})_{\alpha\in A}$
	whose limit we denote by $y$.
	Now, we choose a circled neighborhood of the origin $V$ such that $V+V\subset U$,
	which exists by \cite[\S 15.1(3)]{koethe1969}.
	Then there is a $\beta\in A$ such that $y_{k(\alpha)} \in V+y$ for all $\alpha\geq \beta$
	and hence, $y \in V+y_{k(\beta)}$. This implies that
	\[ y_{k(\alpha)} = y_{k(\alpha)}-y+y \in V+y \subset V+V+y_{k(\beta)} \subset U+y_{k(\beta)} 
	\subset \bigcup_{j=1}^{k(\beta)}( U+y_j).\]
	for all $\alpha\geq \beta$. Since $\{k(\alpha):\alpha\in A\}$ is cofinal in $\N$, 
	this is in contradiction to
	the construction of the sequence $(y_k)$. Hence, $\mathscr{M}$ is relatively compact.
\end{proof}

The following is the main result of this section.

\begin{thm}\label{t.erg}
Let $(\mathscr{S},\mathscr{A})$ be an average scheme on a norming dual pair $(X,Y)$.
Then the following are equivalent:
\begin{enumerate}[(i)]
 \item\label{t.erg.1} The average scheme $(\mathscr{S},\mathscr{A})$ is weakly ergodic.
 \item\label{t.erg.2} For every $x \in X$ the net $(A_\alpha x)$ clusters in $(X,\sigma)$ and
for every $y \in Y$ the net $(A'_\alpha y)$ clusters in $(Y,\sigma')$.
\end{enumerate}
If these equivalent conditions are satisfied, then
\begin{enumerate}[(i)]
\setcounter{enumi}{2}
 \item\label{t.erg.3} The fixed spaces $\fix (\mathscr{S})$ and $\fix (\mathscr{S}')$ separate each other.
 \item\label{t.erg.4} We have $X= \fix (\mathscr{S}) \oplus \overline{\lh}^\sigma \rg(I-\mathscr{S})$ and 
$Y= \fix (\mathscr{S}') \oplus \overline{\lh}^{\sigma'}\rg(I-\mathscr{S}')$.
 \item\label{t.erg.5} The operator $P$, defined by $Px \coloneqq \sigma\minus\lim_{\alpha} A_\alpha x$ belongs to 
$\cL (X,\sigma)$ and the $\sigma$-adjoint $P'$ of $P$ is given by
$P'y  =\sigma'\minus\lim_{\alpha} A'_\alpha y$ for all $y\in Y$.
Moreover, $P$ is the projection onto $\fix (\mathscr{S})$ along $\overline{\lh}^\sigma \rg(I-\mathscr{S})$, $P'$
the projection onto $\fix(\mathscr{S}')$ along $\overline{\lh}^{\sigma'}(I-\mathscr{S}')$
and $PS=SP=P$ for all $S\in\mathscr{S}$.
\end{enumerate}
\end{thm}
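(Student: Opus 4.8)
The plan is to prove the cycle of implications \ref{t.erg.1}$\Rightarrow$\ref{t.erg.5}$\Rightarrow$\ref{t.erg.4}$\Rightarrow$\ref{t.erg.3} and separately \ref{t.erg.1}$\Leftrightarrow$\ref{t.erg.2}, exploiting the built-in symmetry of the hypotheses: every statement I make about $X$, $\mathscr{S}$, $(A_\alpha)$ has an exact mirror on $Y$, $\mathscr{S}'$, $(A'_\alpha)$, so I will only argue on the $X$-side and invoke symmetry. The easy direction \ref{t.erg.1}$\Rightarrow$\ref{t.erg.2} is trivial since a convergent net clusters. For \ref{t.erg.2}$\Rightarrow$\ref{t.erg.1}, fix $x\in X$; by (AS3), for each $S\in\mathscr{S}$ we have $(S-I)A_\alpha x\to 0$ in norm, hence also $\sigma$. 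If $z$ is a $\sigma$-cluster point of $(A_\alpha x)$ along a subnet, then $(S-I)z=0$ for all $S$, i.e.\ $z\in\fix(\mathscr{S})$. Now suppose $(A_\alpha x)$ had two distinct $\sigma$-cluster points $z_1\neq z_2$, both in $\fix(\mathscr{S})$. Picking $y\in Y$ with $\dual{z_1-z_2}{y}\neq 0$, consider the scalar net $\dual{A_\alpha x}{y}=\dual{x}{A'_\alpha y}$. By (AS2), $A'_\alpha y\in\overline{\co}^{\sigma'}(\mathscr{S}'y)$; I want to squeeze this net. The clean way: $\dual{A_\alpha x}{y}$ has cluster points $\dual{z_1}{y}$ and $\dual{z_2}{y}$, but I need to show all cluster points of the scalar net coincide. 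For that use the $Y$-side: by hypothesis $(A'_\alpha y)$ clusters in $(Y,\sigma')$; any $\sigma'$-cluster point $w$ lies in $\fix(\mathscr{S}')$ by the mirror of (AS3); and since $z_i\in\fix(\mathscr{S})$, Remark \ref{r.fixedpoint} gives $A_\beta z_i=z_i$, so along a subnet where $A_\beta x\to z_i$ we also get, applying $A'_\gamma$, that $\dual{z_i}{A'_\gamma y}=\lim_\beta\dual{A_\beta x}{A'_\gamma y}$ — this lets me identify $\dual{z_i}{y}=\dual{x}{w}$ for every cluster point $w$ of $(A'_\alpha y)$, forcing $\dual{z_1}{y}=\dual{z_2}{y}$, a contradiction. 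Hence the $\sigma$-cluster point of $(A_\alpha x)$ is unique; combined with clustering of every subnet this yields $\sigma$-convergence (a net all of whose subnets have a convergent subnet with a common limit converges to that limit). The mirror argument gives $\sigma'$-convergence on $Y$, so \ref{t.erg.1} holds.

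Granting \ref{t.erg.1}, I define $Px:=\sigma\text{-}\lim_\alpha A_\alpha x$ and $Qy:=\sigma'\text{-}\lim_\alpha A'_\alpha y$; linearity is immediate, and $\|Px\|\le M\|x\|$ since the $\sigma$-limit lies in the $\sigma$-closure of a ball of radius $M\|x\|$, using that norm-closed balls are $\sigma$-closed (the pair is norming). To see $P\in\cL(X,\sigma)$ with $P'=Q$, compute $\dual{Px}{y}=\lim_\alpha\dual{A_\alpha x}{y}=\lim_\alpha\dual{x}{A'_\alpha y}=\dual{x}{Qy}$, so the norm-adjoint $P^*$ maps $y\mapsto Qy\in Y$, hence $P$ is $\sigma$-continuous with $\sigma$-adjoint $Q$. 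Next, $P$ is a projection onto $\fix(\mathscr{S})$: by (AS3), $(S-I)A_\alpha x\to 0$ in norm so $(S-I)Px=0$, giving $\rg(P)\subseteq\fix(\mathscr{S})$; conversely if $x\in\fix(\mathscr{S})$ then Remark \ref{r.fixedpoint} gives $A_\alpha x=x$ for all $\alpha$, so $Px=x$; thus $P^2=P$ and $\rg(P)=\fix(\mathscr{S})$. The identities $SP=P$ and $PS=P$ follow: $SPx=Px$ because $Px\in\fix(\mathscr{S})$, and $PSx=\sigma\text{-}\lim A_\alpha Sx=\sigma\text{-}\lim(A_\alpha Sx - A_\alpha x)+\sigma\text{-}\lim A_\alpha x = 0 + Px$ by (AS3). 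It remains to identify $\kernel(P)$ with $\overline{\lh}^\sigma\rg(I-\mathscr{S})$. One inclusion: $P(I-S)x=Px-PSx=0$ by the above, so $\rg(I-\mathscr{S})\subseteq\kernel(P)$, and $\kernel(P)$ is $\sigma$-closed (as $P$ is $\sigma$-continuous), hence $\overline{\lh}^\sigma\rg(I-\mathscr{S})\subseteq\kernel(P)$. For the reverse, if $Px=0$ I must show $x\in\overline{\lh}^\sigma\rg(I-\mathscr{S})$; this is the main obstacle and I would handle it by a Hahn–Banach separation argument: if $x\notin\overline{\lh}^\sigma\rg(I-\mathscr{S})$, there is $y\in Y$ vanishing on $\rg(I-\mathscr{S})$, i.e.\ $y\in\fix(\mathscr{S}')$ (since $\dual{(I-S)u}{y}=0$ for all $u,S$ means $S'y=y$), with $\dual{x}{y}\neq 0$; but then $\dual{x}{y}=\dual{x}{Qy}$ — wait, rather use $0=\dual{Px}{y}=\dual{x}{Qy}$ and $Qy=y$ since $y\in\fix(\mathscr{S}')$ (mirror of Remark \ref{r.fixedpoint}), so $\dual{x}{y}=0$, contradiction. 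Thus $\kernel(P)=\overline{\lh}^\sigma\rg(I-\mathscr{S})$, and the direct sum decomposition $X=\fix(\mathscr{S})\oplus\overline{\lh}^\sigma\rg(I-\mathscr{S})$ is exactly the statement that $P$ is a projection with this range and kernel. The mirror statements on $Y$ follow by applying everything to the average scheme with roles swapped (noting $Q=P'$ plays the role of $P$), so \ref{t.erg.5} and \ref{t.erg.4} are established.

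Finally \ref{t.erg.4}$\Rightarrow$\ref{t.erg.3}: let $0\neq x\in\fix(\mathscr{S})$. I must find $y\in\fix(\mathscr{S}')$ with $\dual{x}{y}\neq 0$. Since the pair is norming there is $y_0\in Y$ with $\dual{x}{y_0}\neq 0$; decompose $y_0=y_1+y_2$ with $y_1\in\fix(\mathscr{S}')$ and $y_2\in\overline{\lh}^{\sigma'}\rg(I-\mathscr{S}')$. Then $\dual{x}{y_2}=0$: indeed $\dual{x}{(I-S')y}=\dual{(I-S)x}{y}=0$ for all $y$ because $x\in\fix(\mathscr{S})$, so $x$ annihilates $\rg(I-\mathscr{S}')$, and by $\sigma'$-continuity of $u\mapsto\dual{x}{u}$ it annihilates the $\sigma'$-closed span as well. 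Hence $\dual{x}{y_1}=\dual{x}{y_0}\neq 0$, and $y_1\in\fix(\mathscr{S}')$ is the desired functional; by symmetry $\fix(\mathscr{S}')$ likewise separates $\fix(\mathscr{S})$, so \ref{t.erg.3} holds. The delicate point throughout is keeping straight that (AS3) provides \emph{norm} convergence of $(S-I)A_\alpha x$ — which is what makes cluster points land in the fixed space and makes $P$ absorb the semigroup — while the limit defining $P$ is only $\sigma$; the interplay of these two topologies, together with the norming property (closed balls $\sigma$-closed, Hahn–Banach separation valid for the consistent topology $\sigma$), is what the whole argument rests on, and the $\kernel(P)$ identification is where that interplay is used most essentially.
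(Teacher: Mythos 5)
Your proof is correct, but it takes a genuinely different route through the hard implication (\ref{t.erg.2}) $\Rightarrow$ (\ref{t.erg.1}). The paper first extracts assertion (\ref{t.erg.3}) from (\ref{t.erg.2}) (by essentially the same fixed-point/duality computation you use) and then feeds the separation property into Lemma \ref{l.convergence}: there the set $Y_0$ of all $y$ for which $A'_\alpha y$ converges in norm is shown to be $\sigma'$-dense (Lemmas \ref{l.norm} and \ref{l.direct}), $X$ is embedded into $Y_0^*$, and every cluster point of $(A_\alpha x)$ is identified with $R_0^*x$. You instead prove directly that the $\sigma$-cluster point of $(A_\alpha x)$ is unique, by pairing two putative cluster points $z_1,z_2\in\fix(\mathscr{S})$ against a cluster point $w\in\fix(\mathscr{S}')$ of $(A'_\alpha y)$ and using invariance of fixed points under the averages to get $\applied{z_i}{y}=\applied{z_i}{w}=\applied{x}{w}$; this is shorter and avoids the $Y_0^*$ machinery entirely (though Lemma \ref{l.convergence} is reused in the paper for the finer topology $\beta_0$ in Theorem \ref{t.eerg}, so it earns its keep there). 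Your treatment of (\ref{t.erg.4}) and (\ref{t.erg.3}) is also organized differently: you identify $\ker P=\overline{\lh}^\sigma\rg(I-\mathscr{S})$ by a Hahn--Banach separation (any $y$ annihilating the range lies in $\fix(\mathscr{S}')$, is fixed by the $A'_\alpha$, and hence satisfies $\applied{x}{y}=\applied{Px}{y}=0$), whereas the paper uses the relation $x-Px=\sigma\minus\lim_\alpha(x-A_\alpha x)\in\overline{\lh}^\sigma\rg(I-\mathscr{S})$; and you deduce (\ref{t.erg.3}) from the decomposition (\ref{t.erg.4}) rather than directly from (\ref{t.erg.2}). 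All the supporting steps check out, in particular the topological fact that a net each of whose subnets has a subnet converging to one and the same limit must converge, and the $\sigma$-closedness of norm balls in a norming dual pair; your version is the more self-contained for this theorem, the paper's sets up lemmas it needs later.
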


For a weakly ergodic average scheme $(\mathscr{S},\mathscr{A})$,
the operator $P$ from (\ref{t.erg.5}) is called the \emph{ergodic projection}.
Note that the ergodic projection $P$ 
is uniquely determined by the semigroup $\mathscr{S}$ and 
independent of the averages $\mathscr{A}$.

We prepare the proof of Theorem \ref{t.erg} through a series of lemmas.

\begin{lem}\label{l.1}
Let $X$ be a Banach space and $\mathscr{S}\subset \mathscr{L}(X)$ be semigroup of bounded operators on $X$.
Moreover, let $(A_\alpha)_{\alpha \in\Lambda}\subset\mathscr{L}(X)$ be a net and let $x\in X$ be
such that
\[ \lim_{\alpha} (S-I)A_\alpha x  = 0 \text{ for all }S \in \mathscr{S}.\]
Assume that $Z \subset X^*$ separates points in $X$ and $S^*Z \subset Z$ for all $S\in\mathscr{S}$.
Then every $\sigma (X,Z)$-cluster point of $(A_\alpha x)$ belongs to $\fix (\mathscr{S})$.
\end{lem}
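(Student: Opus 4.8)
The plan is to fix an arbitrary $\sigma(X,Z)$-cluster point $z$ of the net $(A_\alpha x)$ and show directly that $z\in\fix(\mathscr{S})$, i.e. $Sz=z$ for every $S\in\mathscr{S}$. By the definition of a cluster point there is a subnet $(A_{\alpha_\beta}x)_\beta$ converging to $z$ in $\sigma(X,Z)$. Since $Z$ separates the points of $X$, it suffices to check that $\langle Sz-z,\varphi\rangle=0$ for every $\varphi\in Z$ and every $S\in\mathscr{S}$; so I would fix such a $\varphi$ and $S$ and compute.

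The key point is the invariance hypothesis $S^*Z\subseteq Z$, which allows one to test the $\sigma(X,Z)$-convergence of the subnet not only against $\varphi$ but also against $S^*\varphi\in Z$. This gives $\langle A_{\alpha_\beta}x,\varphi\rangle\to\langle z,\varphi\rangle$ and, using $\langle SA_{\alpha_\beta}x,\varphi\rangle=\langle A_{\alpha_\beta}x,S^*\varphi\rangle$, also $\langle SA_{\alpha_\beta}x,\varphi\rangle\to\langle z,S^*\varphi\rangle=\langle Sz,\varphi\rangle$. On the other hand, the assumption $\lim_\alpha(S-I)A_\alpha x=0$ holds in the norm of $X$, hence passes to the subnet, so $|\langle(S-I)A_{\alpha_\beta}x,\varphi\rangle|\le\|(S-I)A_{\alpha_\beta}x\|\,\|\varphi\|\to 0$. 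Subtracting the first limit from the second and using this estimate yields $\langle Sz,\varphi\rangle-\langle z,\varphi\rangle=0$.

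Since $\varphi\in Z$ and $S\in\mathscr{S}$ were arbitrary and $Z$ separates points, this gives $Sz=z$ for all $S\in\mathscr{S}$, that is, $z\in\fix(\mathscr{S})$, which is the claim. I do not expect a genuine obstacle here: the argument is a two-line limit computation. The only points requiring a small amount of care are that ``cluster point'' is to be read via a convergent subnet rather than via relative compactness of a tail (so one really works with a subnet throughout), and that norm convergence of $(S-I)A_\alpha x$ is inherited by every subnet. Note also that $Z$ need not be a linear subspace of $X^*$; only the stated invariance $S^*Z\subseteq Z$ together with the separation property is used.
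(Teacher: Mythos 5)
Your argument is correct and is essentially the paper's own proof: both hinge on testing the cluster point against $\varphi$ and $S^*\varphi$ (using $S^*Z\subseteq Z$) and on the fact that the norm convergence $(S-I)A_\alpha x\to 0$ survives passage to the weak topology. The only cosmetic difference is that you extract a convergent subnet while the paper works directly with the $\eps$--$\alpha_0$ formulation of a cluster point; the two are equivalent.
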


\begin{proof}
Fix $x \in X$ and let $w$ be a $\sigma (X,Z)$-cluster point of $(A_\alpha x)$.
Let $S\in\mathscr{S}$.
We have
\[
 Sw-w = (S-I)(w-A_\alpha x) + (S-I) A_\alpha x
\]
 for all $\alpha \in \Lambda$
and, by assumption, $(S-I) A_\alpha x \to 0$ in norm and hence also with respect to $\sigma(X, Z)$.

Now fix $z \in Z$. Given $\eps>0$, we find $\alpha_0$ such that 
$\vert \applied{(S-I)A_\alpha x}{z} \vert < \eps$ for all $\alpha \geq \alpha_0$.
Since $S^* z \in Z$ and since 
$w$ is an $\sigma (X,Z)$-cluster point of $(A_\alpha x)$, we find some $\beta \geq \alpha_0$ such that
\[
 |\applied{(S-I)(w-A_\beta x)}{z}| = |\applied{w-A_\beta x}{S^*z-z}| \leq \eps\, .
\]
This implies that $|\applied{Sw-w}{z}| \leq 2\eps$. Since $\eps>0$ was arbitrary, it follows that
$\applied{Sw-w}{z} =0$
and thus, since $z \in Z$ was arbitrary, $w = Sw$.
\end{proof}

\begin{lem}\label{l.norm}
Let $(\mathscr{S},\mathscr{A})$ be an average scheme on a norming dual pair $(X,Y)$.
Then
\[
 X_0 \coloneqq \{ x \in X \,:\, \lim_{\alpha} A_\alpha x\text{ exists w.r.t.\ }\Vert\cdot\Vert\,\}
\]
is a norm-closed subspace of $X$ and invariant under the action of $\mathscr{S}$.
Moreover, the sum $\fix(\mathscr{S})+\overline{\lh}^{\Vert\cdot\Vert}\rg(I-\mathscr{S})$
is direct and
$\fix (\mathscr{S}) \oplus \overline{\lh}^{\Vert\cdot\Vert}\rg(I-\mathscr{S})\subset X_0$.
Finally, $P_0 x\coloneqq \Vert\cdot\Vert\minus\lim_{\alpha} A_\alpha x$
defines a bounded operator on $X_0$ which is a projection onto $\fix(\mathscr{S})$ 
with $\overline{\lh}^{\Vert\cdot\Vert}\rg(I-\mathscr{S})\subset \ker P_0$.
\end{lem}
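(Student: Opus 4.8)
The plan is to verify the assertions in the order they are stated, treating them as a chain of increasingly refined structural facts about the norm-limit set $X_0$, using only (AS1)--(AS3) and Remark \ref{r.fixedpoint}.

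First I would show $X_0$ is a norm-closed subspace. Linearity is clear from linearity of each $A_\alpha$ and the linearity of norm-limits. For closedness, suppose $x_k \to x$ in norm with each $x_k \in X_0$; using the uniform bound $\Vert A_\alpha\Vert \leq M$ from (AS1), a standard $3\eps$-estimate shows $(A_\alpha x)$ is norm-Cauchy along the net, hence (as $X$ is complete, being a Banach space) norm-convergent, so $x\in X_0$. For $\mathscr{S}$-invariance: fix $x\in X_0$ and $S\in\mathscr{S}$; write $A_\alpha S x = A_\alpha(S-I)x + A_\alpha x$, and by (AS3) the first term tends to $0$ in norm while the second converges, so $Sx \in X_0$ (and in fact $P_0 Sx = P_0 x$, which I will reuse below). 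The same computation with $S A_\alpha x = A_\alpha x + (S-I)A_\alpha x$ shows $P_0 x \in \fix(\mathscr{S})$ for every $x\in X_0$, since $(S-I)A_\alpha x \to 0$ by (AS3) forces $(S-I)P_0 x = 0$.

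Next, directness of the sum $\fix(\mathscr{S}) + \overline{\lh}^{\Vert\cdot\Vert}\rg(I-\mathscr{S})$: it suffices to show that a fixed vector lying in $\overline{\lh}^{\Vert\cdot\Vert}\rg(I-\mathscr{S})$ is zero. By Remark \ref{r.fixedpoint}, if $u\in\fix(\mathscr{S})$ then $A_\alpha u = u$ for all $\alpha$, so $u \in X_0$ and $P_0 u = u$. On the other hand, for a generator $x - Sx$ of $\rg(I-\mathscr{S})$ we have $A_\alpha(x - Sx) = A_\alpha(I-S)x \to 0$ in norm by (AS3), so $x - Sx \in X_0$ and $P_0(x-Sx) = 0$; by linearity and norm-continuity of $P_0$ on $X_0$ (which follows from $\Vert P_0\Vert \leq M$, inherited from (AS1)), the whole space $\overline{\lh}^{\Vert\cdot\Vert}\rg(I-\mathscr{S})$ lies in $X_0$ and is annihilated by $P_0$. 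Hence if $u = v$ with $u \in \fix(\mathscr{S})$ and $v \in \overline{\lh}^{\Vert\cdot\Vert}\rg(I-\mathscr{S})$, then $u = P_0 u = P_0 v = 0$. This simultaneously establishes the inclusion $\fix(\mathscr{S}) \oplus \overline{\lh}^{\Vert\cdot\Vert}\rg(I-\mathscr{S}) \subset X_0$ and that $\overline{\lh}^{\Vert\cdot\Vert}\rg(I-\mathscr{S}) \subset \ker P_0$. Finally, $P_0$ is a projection onto $\fix(\mathscr{S})$: it maps $X_0$ into $\fix(\mathscr{S})$ by the previous paragraph, it fixes $\fix(\mathscr{S})$ pointwise since $P_0 u = u$ there, and therefore $P_0^2 = P_0$.

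I expect the only genuinely delicate point to be the norm-Cauchy argument establishing closedness of $X_0$: one must combine (AS1) with the defining property of $X_0$ carefully, noting that $\Vert A_\alpha x - A_\beta x\Vert \leq \Vert A_\alpha(x - x_k)\Vert + \Vert A_\alpha x_k - A_\beta x_k\Vert + \Vert A_\beta(x_k - x)\Vert \leq 2M\Vert x - x_k\Vert + \Vert A_\alpha x_k - A_\beta x_k\Vert$, and then choosing $k$ large and $\alpha,\beta$ far enough along $\Lambda$. Everything else is a bookkeeping exercise once the identity $A_\alpha(S-I) \to 0$ and $(S-I)A_\alpha \to 0$ in norm from (AS3) is in hand.
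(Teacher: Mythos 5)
Your proof is correct and follows essentially the same route as the paper's: the $3\eps$/uniform-boundedness argument for closedness of $X_0$, (AS3) for the $\mathscr{S}$-invariance and for annihilating $\rg(I-\mathscr{S})$ and its closed span, and Remark \ref{r.fixedpoint} for $P_0u=u$ on the fixed space. The only cosmetic difference is that you verify $P_0x\in\fix(\mathscr{S})$ directly from $(S-I)A_\alpha x\to 0$ in norm, where the paper invokes Lemma \ref{l.1}; both are valid.
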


\begin{proof}
Let $x_k \in X_0$ and $\lim x_k = x$ with respect to $\|\cdot\|$. 
We denote by $P_0x_k$ the limit of $A_\alpha  x_k$ for every fixed $k\in \CN$.
Then $P_0x_k \in \fix (\mathscr{S})$ by Lemma~\ref{l.1} and we have
\[
 \|P_0x_k - P_0x_l\| = \lim_{\alpha} \|A_\alpha (x_k-x_l)\| \leq M \|x_k-x_l\|
\]
 for all $k,l \in \N$,
where $M$ is such that $\|A_\alpha \|\leq M$ for all $\alpha\in\Lambda$. 
Since $(x_k)$ is a Cauchy sequence, so is $(P_0x_k)$.
Thus, $P_0x_k \to \bar{x}$ for some $\bar{x}$ which belongs to $\fix(\mathscr{S})$ as the latter is closed.
A $3\eps$-argument shows that $A_\alpha x \to \bar{x}$. It follows that $X_0$ is closed and $P_0x=\bar{x}$. 

We have seen that $\|P_0\|\leq M$ and $P_0 X_0 \subset \fix (\mathscr{S})$.
Conversely, $\fix (\mathscr{S})\subset P_0 X_0$ since $A_\alpha x \equiv x = P_0 x$ 
for $x \in \fix (\mathscr{S})$.
Hence, $P_0 X_0 = \fix(\mathscr{S})$ and $P_0$ is a projection.
The $\mathscr{S}$-invariance of $X_0$ follows from (AS3).

By the definition of an average scheme, $\lim_{\alpha} A_\alpha x = 0$ for all $x\in \rg(I-\mathscr{S})$.
In view of the uniform boundedness of the operators $A_\alpha$, this remains true for 
$x \in \overline{\lh}^{\Vert\cdot\Vert}\rg(I-\mathscr{S})$.
Since $A_\alpha x \to x$ for $x\in\fix (S)$, it follows that the sum of $\fix (S)$ 
and $\overline{\lh}^{\|\cdot\|}\rg(I-\mathscr{S})$ is direct and that 
$\fix (S) \oplus \overline{\lh}^{\|\cdot\|}\rg(I-\mathscr{S}) \subset X_0$.
\end{proof}

\begin{lem}\label{l.direct}
Let $(\mathscr{S},\mathscr{A})$ be an average scheme on a norming dual pair $(X,Y)$.
If $\fix (\mathscr{S})$ separates $\fix (\mathscr{S}')$, then 
$\fix (\mathscr{S}) + \rg(I-\mathscr{S})$ is $\sigma (X,Y)$-dense in $X$. 
If $\fix (\mathscr{S}')$ separates $\fix (\mathscr{S})$, then the sum 
$\fix (\mathscr{S}) + \overline{\lh}^\sigma \rg(I-\mathscr{S})$ is direct. 
\end{lem}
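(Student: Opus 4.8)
The plan is to treat the two assertions separately, in each case reducing the claim to a statement about annihilators and exploiting that $(X,Y)$ is a \emph{norming} dual pair. Recall that ``$\fix(\mathscr{S})$ separates $\fix(\mathscr{S}')$'' means that the only $y\in\fix(\mathscr{S}')$ annihilating $\fix(\mathscr{S})$ is $y=0$, and symmetrically for the other direction; these are the only inputs beyond the definition of an average scheme.

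For the first assertion I would use the standard fact that a linear subspace $V\subseteq X$ is $\sigma(X,Y)$-dense if and only if its annihilator $V^\perp\coloneqq\{y\in Y:\dual{v}{y}=0\ \text{for all}\ v\in V\}$ is $\{0\}$. So let $y\in Y$ annihilate $V\coloneqq\fix(\mathscr{S})+\rg(I-\mathscr{S})$. Testing $y$ against the elements $x-Sx\in\rg(I-\mathscr{S})$ gives $\dual{x}{y}=\dual{Sx}{y}=\dual{x}{S'y}$ for all $x\in X$ and all $S\in\mathscr{S}$; since the pairing is norming for $Y$, this forces $S'y=y$ for every $S\in\mathscr{S}$, i.e.\ $y\in\fix(\mathscr{S}')$. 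But $y$ also annihilates $\fix(\mathscr{S})$, so the hypothesis that $\fix(\mathscr{S})$ separates $\fix(\mathscr{S}')$ yields $y=0$. Hence $V^\perp=\{0\}$ and $V$ is $\sigma(X,Y)$-dense.

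For the second assertion I would show directly that $\fix(\mathscr{S})\cap\overline{\lh}^\sigma\rg(I-\mathscr{S})=\{0\}$. Pick $x$ in this intersection. Every $y\in\fix(\mathscr{S}')$ annihilates $\rg(I-\mathscr{S})$, since $\dual{z-Sz}{y}=\dual{z}{y}-\dual{z}{S'y}=0$; being $\sigma(X,Y)$-continuous on $X$, $y$ then annihilates the linear span and, passing to the $\sigma$-closure, also $\overline{\lh}^\sigma\rg(I-\mathscr{S})$. In particular $\dual{x}{y}=0$ for every $y\in\fix(\mathscr{S}')$. Since $x\in\fix(\mathscr{S})$ and $\fix(\mathscr{S}')$ separates $\fix(\mathscr{S})$, it follows that $x=0$, so the sum is direct.

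I do not expect a serious obstacle: both parts are routine duality manipulations. The two points that need a little care are (a) upgrading the scalar identity $\dual{x}{y}=\dual{x}{S'y}$ valid for all $x\in X$ to the operator identity $S'y=y$, which is exactly where the norming property enters, and (b) the passage from ``$y$ annihilates $\rg(I-\mathscr{S})$'' to ``$y$ annihilates $\overline{\lh}^\sigma\rg(I-\mathscr{S})$'', which uses precisely that the elements of $Y$ act as $\sigma(X,Y)$-continuous functionals on $X$.
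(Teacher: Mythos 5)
Your proof is correct and follows essentially the same route as the paper: for the first assertion, show that any $y\in Y$ annihilating $\fix(\mathscr{S})+\rg(I-\mathscr{S})$ lies in $\fix(\mathscr{S}')$ (using that $X$ separates $Y$) and hence vanishes, then invoke Hahn--Banach; for the second, note that every $y\in\fix(\mathscr{S}')$ annihilates $\overline{\lh}^\sigma\rg(I-\mathscr{S})$ and apply the separation hypothesis. The only cosmetic difference is that you cite the norming property where plain separation of $Y$ by $X$ suffices.
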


\begin{proof}
Assume that $\fix (\mathscr{S})$ separates $\fix (\mathscr{S}')$. Let $y \in Y$ be such that 
$\applied{x}{y} = 0$ for all $x \in \fix (\mathscr{S}) \oplus \rg(I-\mathscr{S})$.
Then, in particular, $0= \applied{x-Sx}{y} = \applied{x}{y-S'y}$ for all $x \in X$ and $S\in\mathscr{S}$.
Since $X$ separates $Y$, it follows that $y = S'y$ for all $S\in\mathscr{S}$, i.e.\ $y \in \fix (\mathscr{S}')$. 
Moreover, $\applied{x}{y} =0$ for all $x \in \fix (\mathscr{S})$. 
By assumption, this implies $y=0$. It now follows from the 
Hahn-Banach theorem, applied on the locally convex space $(X,\sigma)$,
that $\fix (\mathscr{S}) + \rg(I-\mathscr{S})$ is $\sigma (X,Y)$-dense in $X$.

Now assume that $\fix (\mathscr{S}')$ separates $\fix (\mathscr{S})$. Since every $y\in\fix(\mathscr{S}')$
vanishes on $\rg(I-\mathscr{S})$, it also vanishes on $\overline{\lh}^\sigma \rg(I-\mathscr{S})$ by linearity and
continuity. Thus, $\applied{x}{y}=0$ for all $x \in \fix (\mathscr{S}) \cap \overline{\lh}^\sigma \rg(I-\mathscr{S})$
and $y\in\fix(\mathscr{S}')$.
As $\fix(\mathscr{S}')$ separates $\fix(\mathscr{S})$, it follows that $0$ is the only element of
$\fix (\mathscr{S}) \cap \overline{\lh}^\sigma \rg(I-\mathscr{S})$.
\end{proof}

\begin{lem}
\label{l.convergence}
Let $(\mathscr{S},\mathscr{A})$ be an average scheme on a norming dual pair $(X,Y)$ and
$\tau$ be a locally convex topology on $X$ finer than $\sigma$. Let
\[ X_1 \coloneqq \{ x\in X : (A_\alpha x)_{\alpha\in\Lambda} \text{ clusters in } (X,\tau) \}.\]
If $\fix (\mathscr{S}')$ separates $\fix (\mathscr{S})$, then
$\tau\minus\lim_\alpha A_\alpha x \in X$ exists  for all $x\in X_1$.
\end{lem}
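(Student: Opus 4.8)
The plan is to show that for $x \in X_1$ the cluster net $(A_\alpha x)$ actually converges, by showing it has a unique cluster point and then invoking the cluster-point/convergence relationship. First I would fix $x \in X_1$ and let $w$ be any $\tau$-cluster point of $(A_\alpha x)$; since $\tau$ is finer than $\sigma$, $w$ is also a $\sigma$-cluster point. Applying Lemma~\ref{l.1} with $Z = Y$ (which separates points of $X$, with $S^* Y = S' Y \subset Y$ since $S \in \cL(X,\sigma)$) and using the half of (AS3) that gives $\lim_\alpha (S-I)A_\alpha x = 0$ in norm, I conclude that $w \in \fix(\mathscr{S})$. So every $\tau$-cluster point of $(A_\alpha x)$ lies in the fixed space.

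Next I would show that the difference of any two $\tau$-cluster points lies in $\overline{\lh}^\sigma \rg(I-\mathscr{S})$. For this, note that by (AS2) we have $A_\alpha x \in \overline{\co}^\sigma(\mathscr{S}x)$, and I claim $\overline{\co}^\sigma(\mathscr{S}x) \subset \fix(\mathscr{S}) + \overline{\lh}^\sigma \rg(I-\mathscr{S})$ whenever a cluster point $w \in \fix(\mathscr{S})$ exists: indeed, for any $S \in \mathscr{S}$, $Sx - x \in \rg(I-\mathscr{S})$, so any convex combination $\sum a_k S_k x$ differs from $(\sum a_k) x = x$ by an element of $\lh \rg(I-\mathscr{S})$, i.e.\ $\co(\mathscr{S}x) \subset x + \lh\,\rg(I-\mathscr{S})$; taking $\sigma$-closures, $\overline{\co}^\sigma(\mathscr{S}x) \subset \overline{x + \lh\,\rg(I-\mathscr{S})}^\sigma = x + \overline{\lh}^\sigma \rg(I-\mathscr{S})$ since the latter is a $\sigma$-closed affine subspace. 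Hence any two $\tau$-cluster points $w_1, w_2$ of $(A_\alpha x)$ both lie in $x + \overline{\lh}^\sigma \rg(I-\mathscr{S})$, so $w_1 - w_2 \in \overline{\lh}^\sigma \rg(I-\mathscr{S})$. But also $w_1 - w_2 \in \fix(\mathscr{S})$ by the previous paragraph. By Lemma~\ref{l.direct}, the hypothesis that $\fix(\mathscr{S}')$ separates $\fix(\mathscr{S})$ forces $\fix(\mathscr{S}) \cap \overline{\lh}^\sigma \rg(I-\mathscr{S}) = \{0\}$, whence $w_1 = w_2$: the net $(A_\alpha x)$ has exactly one $\tau$-cluster point.

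Finally I would upgrade "unique cluster point" to "convergent". Since $x \in X_1$, the net $(A_\alpha x)$ clusters in the sense of Definition~\ref{d.compactnet}, meaning every subnet has a convergent subnet; every such subnet-limit is a $\tau$-cluster point of $(A_\alpha x)$, hence equals the unique cluster point $w$. A net all of whose subnets have a subnet converging to the same point $w$ converges to $w$ (if not, there would be a neighbourhood $U$ of $w$ and a cofinal set of indices with $A_\alpha x \notin U$, yielding a subnet staying outside $U$, which then could not have a subnet converging to $w$ — contradiction). Therefore $\tau\minus\lim_\alpha A_\alpha x = w \in X$ exists for every $x \in X_1$. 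The main obstacle is the middle step — pinning down the affine structure of $\overline{\co}^\sigma(\mathscr{S}x)$ and correctly combining it with the separation hypothesis via Lemma~\ref{l.direct} to get uniqueness; the clustering-to-convergence passage at the end is a routine topological argument.
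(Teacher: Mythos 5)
Your proof is correct, but it takes a genuinely different route from the paper's. You establish convergence by showing the $\tau$-cluster points of $(A_\alpha x)$ are unique: every cluster point lies in $\fix(\mathscr{S})$ by Lemma~\ref{l.1}, and by (AS2) together with the affine containment $\overline{\co}^\sigma(\mathscr{S}x)\subset x+\overline{\lh}^\sigma\rg(I-\mathscr{S})$ every cluster point also lies in $x+\overline{\lh}^\sigma\rg(I-\mathscr{S})$, so the difference of two cluster points lies in $\fix(\mathscr{S})\cap\overline{\lh}^\sigma\rg(I-\mathscr{S})=\{0\}$ by the directness half of Lemma~\ref{l.direct}; the passage from ``clusters with a unique cluster point'' to convergence is the routine subnet argument you give. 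The paper instead constructs the limit explicitly: it applies Lemma~\ref{l.norm} to the adjoint scheme $(\mathscr{S}',\mathscr{A}')$ to obtain the norm-limit operator $R_0$ on $Y_0\supset\fix(\mathscr{S}')\oplus\rg(I-\mathscr{S}')$, uses the \emph{density} half of Lemma~\ref{l.direct} (applied on $(Y,X)$) to see that $Y_0$ is $\sigma'$-dense, embeds $X$ into $Y_0^*$, and concludes that $A_\alpha x\to R_0^*x$ in the Hausdorff topology $\sigma(Y_0^*,Y_0)$, which forces every $\tau$-cluster point to equal $R_0^*x$. So the two arguments consume the same hypothesis through the two different halves of Lemma~\ref{l.direct}. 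Your version is more self-contained (no detour through $Y_0^*$, no need for Lemma~\ref{l.norm}); the paper's version has the side benefit of producing a candidate limit $R_0^*x$ defined for \emph{all} $x\in X$, independent of $\tau$, which fits naturally with the subsequent identification of the ergodic projection in Theorem~\ref{t.erg}. Both are valid proofs of the lemma as stated.
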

\begin{proof}
Applying Lemma~\ref{l.norm} to the average scheme $(\mathscr{S}',\mathscr{A}')$ on $(Y,X)$, we find that
\[Y_0 \coloneqq \{y \in Y\,:\, \|\cdot\|\minus \lim_{\alpha} A'_\alpha y\,\,\mbox{exists}\}\]
is a norm-closed subspace of $Y$ that contains $\fix(\mathscr{S}')\oplus \rg(I-\mathscr{S}')$.
Moreover, there exists an operator $R_0 \in \cL (Y_0)$ such that 
$\|\cdot\|\minus \lim_{\alpha}A_\alpha'y = R_0y$ for all $y\in Y_0$.
As $\fix(\mathscr{S}')$ separates $\fix(\mathscr{S})$,  Lemma~\ref{l.direct} yields that $Y_0$ is $\sigma (Y,X)$-dense in $Y$. 
Hence, we may identify $X$ with a subspace of $Y_0^*$. Doing so, it follows that
$\sigma (Y_0^*, Y_0)\minus \lim_{\alpha} A_\alpha x = R_0^*x$ for all $x \in X$.
Let us fix $x \in X_1$ and choose an arbitrary subnet of $(u_\beta)$ of $(A_\alpha x)$.
By assumption, $(u_\beta)$ has a $\tau$-cluster point $\bar{x}\in X$.
Since $\bar{x}$ is also a $\sigma (Y_0^*,Y_0)$-cluster point of $(A_\alpha x)$, we infer that $\bar{x} = R_0^*x$.
Thus, every subnet of $(A_\alpha x)$ has a subnet converging to $R_0^*x \in X$ in $(X,\tau)$.
This implies that $\tau\minus\lim_{\alpha} A_{\alpha}x = R_0^*x$ for all $x\in X_1$.
\end{proof}

Now, we have the tools at hand to prove Theorem~\ref{t.erg}.

\begin{proof}[Proof of Theorem~\ref{t.erg}]
The implication (\ref{t.erg.1}) $\Rightarrow$ (\ref{t.erg.2}) is trivial, 
so assume that (\ref{t.erg.2}) holds.
Let us verify (\ref{t.erg.3}) first.
Since $Y$ is norming for $X$, given $x \in \fix (\mathscr{S})$, $x\neq 0$,
we find $y \in Y$ such that $\applied{x}{y} = a \neq 0$. 
By assumption, $A'_\alpha y$ has a $\sigma (Y,X)$-cluster point $z$ which, by Lemma~\ref{l.1}, is an 
element of $\fix (\mathscr{S}')$.
Since
\[ \applied{x}{A'_\alpha y} = \applied{A_\alpha x}{y} = \applied{x}{y}=a\]
for all $\alpha\in\Lambda$ it follows that $\applied{x}{z} = a \neq 0$.
Hence, $\fix (\mathscr{S}')$ separates $\fix (\mathscr{S})$. Interchanging 
the roles of $X$ and $Y$, it follows that $\fix (\mathscr{S})$ separates $\fix (\mathscr{S}')$.
Now, Assertion (\ref{t.erg.1}) follows immediately from Lemma~\ref{l.convergence} applied to the
average schemes $(\mathscr{S},\mathscr{A})$ and $(\mathscr{S}',\mathscr{A}')$ and the weak topologies
$\sigma(X,Y)$ and $\sigma(Y,X)$, respectively.\smallskip

We continue with the verification of Assertion (\ref{t.erg.4}). By (\ref{t.erg.3}) and Lemma~\ref{l.direct}, 
the sums 
\[ \fix (\mathscr{S}) + \overline{\lh}^\sigma\rg(I-\mathscr{S}) \text{ and }
\fix (\mathscr{S}') + \overline{\lh}^{\sigma'}\rg(I-\mathscr{S}')\] are direct and dense in $X$ with respect to 
$\sigma$, resp.\ dense in $Y$ with respect to $\sigma'$. 
Let $x\in X$ and $\bar{x} \coloneqq \lim_\alpha A_\alpha x \in \fix(\mathscr{S})$. Since 
$x-\mathrm{co}(\mathscr{S}x) \subset \lh \,\rg(I-\mathscr{S})$, we have that
$x-A_\alpha x \in \overline{\lh}^\sigma \rg(I-\mathscr{S})$ for all $\alpha\in\Lambda$ and, consequently,
\begin{equation}\label{eq.rangeconv}
 x - \bar{x} = \sigma\minus\lim_{\alpha} (x- A_\alpha x) \in \overline{\lh}^\sigma \rg(I-\mathscr{S}).
 \end{equation}
This shows that $X=\fix(\mathscr{S})\oplus \overline{\lh}^\sigma \rg(I-\mathscr{S})$ and analogously
we deduce that $Y= \fix(\mathscr{S}')\oplus \overline{\lh}^{\sigma'} \rg(I-\mathscr{S}')$. 
\medskip 

In order to verify Assertion (\ref{t.erg.5}), we consider the operators $P$ and $R$,
defined by $Px \coloneqq \sigma\minus\lim_\alpha A_\alpha x$
and $Ry \coloneqq \sigma\minus\lim_\alpha A'_\alpha y$. 
Since $\sup\{ \Vert A_\alpha \Vert : \alpha\in\Lambda\}<\infty$ and $X$ and $Y$ are norming for each other, 
it follows that $P\in\mathscr{L}(X)$ and $R\in \mathscr{L}(Y)$.
Moreover, for $x\in X$ and $y\in Y$ we have
\[
 \applied{Px}{y} = \lim_{\alpha}\applied{A_\alpha x}{y}
 = \lim_{\alpha}\applied{x}{A_\alpha'y} = \applied{x}{Ry}
\]
This implies that $P^*Y = RY \subset Y$, hence $P \in \cL (X, \sigma)$. 
As fixed points are  invariant under $\mathscr{A}$, it follows from Lemma~\ref{l.1} that 
$P$ is a projection with $\rg P = \fix(\mathscr{S})$ and, by (AS3),
$\lh\,\rg(I-\mathscr{S})\subset \ker P$. Since $P$ is $\sigma$-continuous
and $\ker P$ is closed, $\overline{\lh}^\sigma \rg(I-\mathscr{S})\subset \ker P$. 
The converse inclusion follows from \eqref{eq.rangeconv}.
Interchanging the roles of $X$ and $Y$, we see that $P'=R$ is the projection onto $\fix(\mathscr{S}')$
along $\overline{\lh}^{\sigma'}(I-\mathscr{S}')$. 

In view of $SPx = Px$ and $P(S-I)x = \lim_\alpha A_\alpha (S-I)x = 0$ for all $x\in X$ and $S\in\mathscr{S}$, 
$P$ commutes with every operator in $\mathscr{S}$.
\end{proof}


Theorem \ref{t.erg} is symmetric in $X$ and $Y$,
in that for every statement concerning $X$ resp.\ $\mathscr{A}$, there 
is a corresponding statement about $Y$ resp.\ $\mathscr{A}'$. This symmetry is crucial.
Indeed, in the case where $Y=X^*$, the norm-bounded net $(A_\alpha x)$ is always relatively $\sigma$-compact and hence
$\sigma$-clusters. However, it does not necessarily $\sigma$-converge as the example of the left shift
on $\ell^\infty$ shows.

Moreover, even if  $A_\alpha x$ $\sigma$-converges for all $x \in X$, one cannot deduce
$\sigma'$-con\-ver\-gence of $A_\alpha'y$ for all $y \in Y$ and hence no weak ergodicity of the average scheme, 
see Example~\ref{ex:Pnotcontinuous}.
\smallskip

Comparing Theorem~\ref{t.erg} with the classical mean ergodic theorem on Banach spaces, an immediate question 
is whether assertions (\ref{t.erg.1}) and (\ref{t.erg.2}) are also equivalent
with (\ref{t.erg.3}) and (\ref{t.erg.4}).  This is not the case in general, as Examples~\ref{ex:noconvergence} and \ref{ex.nodecomp} show.
However, some weaker results hold true, which are stated in the following proposition.

\begin{prop}\label{p.weakerg}
Let $(\mathscr{S},\mathscr{A})$ be an average scheme on a norming dual pair $(X,Y)$. 
\begin{enumerate}[(a)]
\item\label{i.weakerg.a} Suppose that
$\fix (\mathscr{S})$ and $\fix (\mathscr{S}')$ separate each other.
 If $\fix (\mathscr{S})$ (hence also $\fix (\mathscr{S}')$) is finite dimensional, then
\begin{equation}\label{eq.direct}
 X = \fix (\mathscr{S}) \oplus \overline{\lh}^\sigma \rg(I-\mathscr{S}) \quad\mbox{and}\quad
Y = \fix (\mathscr{S}') \oplus \overline{\lh}^{\sigma'}\rg(I-\mathscr{S}').
\end{equation}
 \item\label{i.weakerg.b} Now assume that \eqref{eq.direct} holds and let $P$ denote the projection onto 
$\fix (\mathscr{S})$ along $\overline{\lh}^\sigma \rg(I-\mathscr{S})$.
Then $P \in \cL (X, \sigma)$, $P'$ is the projection onto $\fix(\mathscr{S}')$ along 
$\overline{\lh}^{\sigma'}\rg(I-\mathscr{S}')$ and
\[
 \sigma (X, Y_0)\minus\lim_{\alpha}A_\alpha x = Px\qquad\mbox{and}\qquad
\sigma(Y,X_0)\minus\lim_{\alpha}A_\alpha 'y = P'y
\]
for all $x\in X$ and $y\in Y$, where 
\[ X_0 \coloneqq \fix (\mathscr{S}) \oplus \overline{\lh}^{\Vert\cdot\Vert}\rg(I-\mathscr{S})
\quad\mbox{and}\quad
Y_0 \coloneqq \fix (\mathscr{S}') \oplus \overline{\lh}^{\Vert\cdot\Vert}\rg(I-\mathscr{S}').\]
Moreover, $\fix(\mathscr{S})$ and $\fix(\mathscr{S}')$ separate each other.
\end{enumerate}
\end{prop}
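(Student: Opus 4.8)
For part (\ref{i.weakerg.a}), the plan is to use finite-dimensionality of $\fix(\mathscr{S})$ to promote the $\sigma$-denseness of $\fix(\mathscr{S}) + \rg(I-\mathscr{S})$ coming from Lemma~\ref{l.direct} to the claimed direct-sum decomposition. First I would fix a (finite) basis $x_1,\dots,x_m$ of $\fix(\mathscr{S})$ and, using the fact that $\fix(\mathscr{S}')$ separates $\fix(\mathscr{S})$, choose $y_1,\dots,y_m \in \fix(\mathscr{S}')$ with $\applied{x_i}{y_j} = \delta_{ij}$ (a standard biorthogonalization since the separating functionals restricted to a finite-dimensional space span its dual). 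Then $Q \coloneqq \sum_{i=1}^m \applied{\cdot}{y_i}x_i$ is a $\sigma$-continuous projection onto $\fix(\mathscr{S})$; since each $y_i \in \fix(\mathscr{S}')$ annihilates $\rg(I-\mathscr{S})$ and hence, by $\sigma$-continuity, its $\sigma$-closed span, we get $\overline{\lh}^\sigma\rg(I-\mathscr{S}) \subset \ker Q$. Combined with the directness from Lemma~\ref{l.direct} (which gives $\fix(\mathscr{S}) \cap \overline{\lh}^\sigma\rg(I-\mathscr{S}) = \{0\}$) and the $\sigma$-denseness of $\fix(\mathscr{S}) + \rg(I-\mathscr{S})$, one concludes $X = \ker Q \oplus \fix(\mathscr{S}) = \overline{\lh}^\sigma\rg(I-\mathscr{S}) \oplus \fix(\mathscr{S})$: indeed $\ker Q$ is $\sigma$-closed, contains the $\sigma$-dense set $\rg(I-\mathscr{S})$ modulo $\fix(\mathscr{S})$, and one checks $x - Qx \in \overline{\lh}^\sigma\rg(I-\mathscr{S})$ for all $x$ by approximating $x$ by elements of $\fix(\mathscr{S})+\rg(I-\mathscr{S})$ and using continuity of $I-Q$. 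The symmetric statement for $Y$ follows by interchanging the roles of $X$ and $Y$ (noting both hypotheses are symmetric).

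For part (\ref{i.weakerg.b}), assume \eqref{eq.direct} holds and let $P$ be the projection onto $\fix(\mathscr{S})$ along $\overline{\lh}^\sigma\rg(I-\mathscr{S})$. To see $P \in \cL(X,\sigma)$, I would show $P^*Y \subset Y$: every $y \in Y$ decomposes as $y = y_1 + y_2$ with $y_1 \in \fix(\mathscr{S}')$ and $y_2 \in \overline{\lh}^{\sigma'}\rg(I-\mathscr{S}')$, and one checks $P^*y = y_1$. Indeed, for $x = u + v$ with $u \in \fix(\mathscr{S})$, $v \in \overline{\lh}^\sigma\rg(I-\mathscr{S})$: $\applied{Px}{y} = \applied{u}{y_1+y_2} = \applied{u}{y_1}$ since $y_2$ annihilates $\fix(\mathscr{S})$ (as $\fix(\mathscr{S})$ and $\fix(\mathscr{S}')$ separate each other — wait, I need $\overline{\lh}^{\sigma'}\rg(I-\mathscr{S}')$ annihilates $\fix(\mathscr{S})$, which holds since each element of $\rg(I-\mathscr{S}')$ annihilates $\fix(\mathscr{S})$ by $\applied{u}{(I-S')y'} = \applied{(I-S)u}{y'} = 0$, then extend by $\sigma'$-continuity); and $\applied{x}{y_1} = \applied{u}{y_1} + \applied{v}{y_1}$ with $\applied{v}{y_1} = 0$ since $y_1 \in \fix(\mathscr{S}')$ annihilates $\overline{\lh}^\sigma\rg(I-\mathscr{S})$. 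So $\applied{Px}{y} = \applied{x}{y_1}$, showing $P^*y = y_1 \in Y$, hence $P$ is $\sigma$-continuous with $P'y = y_1$, which is exactly the projection onto $\fix(\mathscr{S}')$ along $\overline{\lh}^{\sigma'}\rg(I-\mathscr{S}')$. The separation statement then follows automatically: if $0 \neq u \in \fix(\mathscr{S})$, pick $y$ with $\applied{u}{y} \neq 0$, then $\applied{u}{P'y} = \applied{Pu}{y} = \applied{u}{y} \neq 0$ and $P'y \in \fix(\mathscr{S}')$; symmetrically the other way.

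The convergence statement $\sigma(X,Y_0)\minus\lim_\alpha A_\alpha x = Px$ is where I expect the main work. The idea is to apply Lemma~\ref{l.norm} to the average scheme $(\mathscr{S}',\mathscr{A}')$ on $(Y,X)$: it gives a norm-closed $\mathscr{S}'$-invariant subspace $Y_0 = \fix(\mathscr{S}') \oplus \overline{\lh}^{\|\cdot\|}\rg(I-\mathscr{S}')$ with a norm-bounded projection $R_0 \in \cL(Y_0)$ onto $\fix(\mathscr{S}')$ given by $R_0 y = \|\cdot\|\minus\lim_\alpha A'_\alpha y$. Then for $y \in Y_0$ and any $x \in X$, $\applied{A_\alpha x}{y} = \applied{x}{A'_\alpha y} \to \applied{x}{R_0 y}$. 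Writing $R_0 y = $ the $\fix(\mathscr{S}')$-component of $y$ and matching with the computation above of $\applied{Px}{y}$ (valid for $y \in Y_0 \subset Y$, where the $\fix(\mathscr{S}')$-component equals $R_0 y$ since the decompositions $Y_0 = \fix(\mathscr{S}') \oplus \overline{\lh}^{\|\cdot\|}\rg(I-\mathscr{S}')$ and $Y = \fix(\mathscr{S}') \oplus \overline{\lh}^{\sigma'}\rg(I-\mathscr{S}')$ are compatible), we get $\lim_\alpha \applied{A_\alpha x}{y} = \applied{x}{R_0 y} = \applied{Px}{y}$ for all $y \in Y_0$, i.e.\ $\sigma(X,Y_0)\minus\lim_\alpha A_\alpha x = Px$. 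The symmetric statement for $Y$, with $X_0 = \fix(\mathscr{S}) \oplus \overline{\lh}^{\|\cdot\|}\rg(I-\mathscr{S})$, follows by applying Lemma~\ref{l.norm} directly to $(\mathscr{S},\mathscr{A})$ and repeating the argument with $X$ and $Y$ interchanged. The main obstacle is keeping straight the two different closures (norm versus weak) defining $X_0$, $Y_0$ versus the spaces in \eqref{eq.direct}, and verifying that the projections they induce on the overlapping domains coincide — this is a compatibility check that needs the directness from \eqref{eq.direct} and the inclusion $\overline{\lh}^{\|\cdot\|}\rg(I-\mathscr{S}') \subset \overline{\lh}^{\sigma'}\rg(I-\mathscr{S}')$.
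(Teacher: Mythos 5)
Your proposal is correct and follows essentially the same route as the paper: part (a) rests on Lemma~\ref{l.direct} plus finite-dimensionality (you unpack the closedness of the sum via an explicit biorthogonal $\sigma$-continuous projection where the paper cites K\"othe), and part (b) uses the same duality computations, with Lemma~\ref{l.norm} applied to $(\mathscr{S}',\mathscr{A}')$ playing the role of the paper's direct (AS3)/(AS1) argument on $\fix(\mathscr{S}')$ and $\rg(I-\mathscr{S}')$. The only reorganization is that you identify $P'$ algebraically from the decomposition of $Y$ before proving the convergence, whereas the paper derives $P'=R$ from the limit computation afterwards; both are valid.
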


\begin{proof}
(a) By Lemma~\ref{l.direct}, the sum  $\fix (\mathscr{S}) \oplus \overline{\lh}^\sigma\rg(I-\mathscr{S})$
is direct and $\sigma$-dense in $X$.
Since $\fix (\mathscr{S})$ is finite dimensional, the sum is $\sigma$-closed by \cite[\S15.5(3)]{koethe1969}.
It follows that the sum equals $X$.
Similarly one sees that $Y = \fix (\mathscr{S}') \oplus \overline{\lh}^\sigma \rg(I-\mathscr{S}')$.\medskip 

(b) For $x\in X$ we have $x= Px + (I-P)x \in \fix (\mathscr{S}) \oplus \overline{\lh}^\sigma\rg(I-\mathscr{S})$.
As $A_\alpha x \equiv x$ for fixed points $x$, to prove $A_\alpha x \to Px$ with respect to 
$\sigma (X, Y_0)$, it suffices to show that $\lim_\alpha \applied{A_\alpha x}{y} =0$ for 
$x \in \overline{\lh}^\sigma \rg(I-\mathscr{S})$ and $y \in Y_0$.
To that end, first observe that for $w=(I-S)v\in \rg(I-\mathscr{S})$ and $y \in \fix (\mathscr{S}')$ we have that
\[ \applied{w}{y} = \applied{(I-S)v}{y} = \applied{v}{(I-S')y} = 0,\]
i.e.\ $y$ vanishes on $\rg(I-\mathscr{S})$ and hence on 
$\overline{\lh}^\sigma \rg(I-\mathscr{S})$ by continuity.
Now let
$y =z-S'z\in \rg(I-\mathscr{S}')$.  Then 
\[
 \lim_\alpha \applied{A_\alpha x}{y} = \lim_\alpha \applied{(I-S)A_\alpha x}{z}  = 0
\]
by (AS3).
In view of the uniform boundedness of $(A_\alpha)$, property (AS1), it follows that 
$\lim_\alpha \applied{A_\alpha x}{y} = 0$ for $y \in \overline{\lh}^{\Vert\cdot\Vert}\rg(I-\mathscr{S}')$, too.
Altogether, we have proved that
\[ \sigma (X,Y_0)\minus\lim_{\alpha} A_\alpha x=Px.\]
It is easy to see that $P$ is norm-closed, hence it is 
bounded by the closed graph theorem.

By interchanging the roles of $X$ and $Y$, one sees that 
$\sigma(Y, X_0)\minus\lim_{\alpha} A_\alpha'y = Ry$ where $R\in \cL (Y)$ is the projection onto
$\fix(\mathscr{S}')$ along $\overline{\lh}^{\sigma'}\rg(I-\mathscr{S}')$.

Now let $x \in X$ and $y \in Y$ be given. Since $Px=P^2x\in \fix(\mathscr{S})\subset X_0$ 
and $Ry=R^2y\in \fix(\mathscr{S}') \subset Y_0$, it follows that
\begin{align*}
 \applied{Px}{y} & = \applied{P^2x}{y} = \lim_{\alpha} \applied{A_\alpha Px}{y}
= \lim_{\alpha} \applied{Px}{A_\alpha'y}\\
&= \applied{Px}{Ry}= \lim_{\alpha}\applied{A_\alpha x}{Ry}
=\lim_\alpha \applied{x}{A_\alpha' Ry}= \applied{x}{Ry}.
\end{align*}

This shows that $P^*Y\subset Y$ and $P'=P^*|_Y =R$. In particular, $P \in \cL (X, \sigma)$.

In order to prove the final assertion, let $x\in \fix(\mathscr{S})$, $x\neq 0$, and $y\in Y$ 
such that $\applied{x}{y} \neq 0$. Then 
\[ 0\neq \applied{x}{y} = \applied{Px}{y} = \applied{x}{P'y} \]
shows that $\fix(\mathscr{S}')$ separates $\fix(\mathscr{S})$.
Interchanging the roles of $X$ and $Y$ finishes the proof.
\end{proof}

Example~\ref{ex.nodecomp} shows that in Part (a)  of Proposition~\ref{p.weakerg} the assumption
that the fixed spaces are finite dimensional cannot be omitted.

\section{Average Schemes on $(C_b(E),\mathscr{M}(E))$}\label{s.eproperty}

Throughout this section, we fix a Polish space $E$ and work on the norming dual 
pair $(C_b(E), \mathscr{M}(E))$, which seems to be the most interesting norming dual pair for applications.
We will impose additional conditions on the average scheme $(\mathscr{S}, \mathscr{A})$ such that 
assertions (i) -- (iv) of Theorem \ref{t.erg} are equivalent. In view of  Proposition \ref{p.weakerg}(b), (iv) implies (iii), so the question is whether 
(iii) implies (i).

Examples \ref{ex:noconvergence} and \ref{ex.nodecomp}
show that this is not true without additional assumptions.

In this section we break the symmetry between $X=C_b(E)$ and $Y=\mathscr{M}(E)$
by imposing additional assumptions  on the semigroup on the function space. Under these assumptions we can show that
the assertions of Theorem \ref{t.erg} concerning the (adjoint)
semigroup on the space of measures are all equivalent.
\medskip

We start by recalling the definition of the strict topology on $C_b(E)$.
Denote by $\mathscr{F}_0(E)$ the space of all bounded functions $f$ on $E$ that vanish at infinity,
i.e.\ for every $\eps>0$ there is a compact set 
$K\subset E$ such that $\vert f(x)\vert < \eps$ for all $x\in K$.
The \emph{strict topology} $\beta_0$ on $C_b(E)$ is the locally convex topology generated by the set of
seminorms $\{ q_\varphi : \varphi \in \mathscr{F}_0(E)\}$ where $q_\varphi(f) \coloneqq \Vert \varphi f\Vert_\infty$.

The strict topology is consistent with the duality, i.e.\ $(C_b(E),\beta_0)'=\mathscr{M}(E)$,
see \cite[Thm 7.6.3]{jarchow1981}, and
it coincides with the compact open topology on norm bounded subsets of $C_b(E)$ \cite[Thm 2.10.4]{jarchow1981}.
Moreover, it is the Mackey topology of the dual pair $(C_b(E),\mathscr{M}(E))$ \cite[Thm 4.5, 5.8]{sentilles1972}, i.e.\
it is the finest locally convex topology on $C_b(E)$ which yields $\mathscr{M}(E)$ as a dual space.
In particular, $\cL(C_b(E),\sigma) = \cL(C_b(E),\beta_0)$, see \cite[21.4(6)]{koethe1969}.
\medskip

Now, we formulate and discuss the main condition we impose on an average scheme throughout this section.
Let $d$ be a complete metric $d$ that generates the topology of $E$
and denote by $\mathrm{Lip}_b(E,d)$ the space of all bounded Lipschitz continuous functions on $E$
with respect to $d$.
We assume the average scheme $(\mathscr{S},\mathscr{A})$ to satisfy the following.
\begin{hyp}
	\label{c.betastar}
	For every $f\in \mathrm{Lip}_b(E,d)$ the net $(A_\alpha f)_{\alpha\in\Lambda}$ clusters in $(C_b(E),\beta_0)$.
\end{hyp}
A priori, this requirement depends on the choice of the metric $d$. 
However, Hypothesis \ref{c.betastar} is necessary for the assertion of
Theorem \ref{t.eerg} which, in turn, is independent of the metric $d$.
Hence, under the other assumptions of Theorem \ref{t.eerg}, 
Hypothesis \ref{c.betastar} holds for some metric $d$ if and
only if it holds for every complete metric on $E$ that generates its topology.
Let us fix such a metric $d$ for the rest of this section.
\medskip

Let us compare Hypothesis \ref{c.betastar} with Assertion (\ref{t.erg.2}) of Theorem \ref{t.erg}.
Instead of assuming that $(A_\alpha f)$ clusters in $(C_b(E),\sigma)$ for every $f\in C_b(E)$,
we require that the net $(A_\alpha f)$ clusters with respect to the finer topology $\beta_0$,
but only for those functions $f$ with some additional regularity, namely for Lipschitz functions.

At first sight, Hypothesis \ref{c.betastar} seems to be rather technical. However, as already mentioned, it
is necessary for Theorem \ref{t.eerg} and, important from the point of view of applications,
it is implied by both the strong Feller property and the e-property, which are well-known assumptions
in the study of ergodic properties of Markov chains and semigroups cf. \cite{kps2010, sw2012}.
Let us discuss these relationships, starting with the e-property,
before continuing with our general theory.\medskip 

A family $\mathscr{T} \subset \cL(C_b(E))$ is said to have the
\emph{e-property} if the orbits $\{Tf:T \in \mathscr{T}\}$ are equicontinuous for all $f \in \mathrm{Lip}_b(E,d)$,
i.e.\ for all $x \in E$ and $\eps >0$ there exists a $\delta>0$
such that $|Tf(x) - Tf(y)| \leq \eps$ for all $T\in\mathscr{T}$ whenever $d(x,y) < \delta$.

A net $(T_i)_{i\in I} \subset \cL(C_b(E))$ is said the have the \emph{eventual e-property} if
there exists a $j \in I$ such that $\{ T_i : i\geq j\}$ has the e-property.

An average scheme $(\mathscr{S},\mathscr{A})$ is said to have the \emph{(eventual) e-property} 
if $(A_\alpha)_{\alpha \in \Lambda}$ has the (eventual) e-property.
\medskip

As an instructive example, consider the shift semigroup $\mathscr{S} = (S(t))_{t\geq 0}$ on $(C_b(\CR), \mathscr{M}(\CR))$,
given by $S(t)f(x) = f(t+x)$. Then $\mathscr{S}$ has the e-property (we will see below that this implies that 
also every average scheme $(\mathscr{S},\mathscr{A})$ has the e-property), since for 
$f \in \mathrm{Lip}_b(\CR, |\cdot|)$ we have
\[
 |S(t)f(x) - S(t)f(y)| = |f(t+x) - f(t+y)| \leq L |x-y|
\]
for all $x,y \in \CR$, where $L$ is the Lipschitz constant of $f$. Note, however, that 
this does \emph{not} imply that the orbit $S(t)f$ is equicontinuous for all $f \in C_b(\CR)$.

Before giving further examples, let us prove the mentioned result concerning the e-property of $\mathscr{S}$ and 
that of $\mathscr{A}$.

\begin{lem}
\label{l.semieprop}
Let $\mathscr{S} \subset \mathscr{L}(C_b(E),\sigma)$ be a semigroup which has the e-property.
Then every average scheme $(\mathscr{S},\mathscr{A})$ has the e-property.
\end{lem}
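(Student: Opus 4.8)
The plan is to fix $f \in \mathrm{Lip}_b(E,d)$ and show directly that the orbit $\{A_\alpha f : \alpha \in \Lambda\}$ is equicontinuous at an arbitrary point $x \in E$. The key structural fact to exploit is property (AS2): for every $\alpha$ and every $g \in C_b(E)$ we have $A_\alpha g \in \overline{\co}^\sigma(\mathscr{S}g)$. Since pointwise evaluation at $x$ and at $y$ are $\sigma$-continuous functionals on $C_b(E)$ (they are integration against Dirac measures $\delta_x,\delta_y \in \mathscr{M}(E)$), the map $g \mapsto |g(x)-g(y)|$ is $\sigma$-lower-semicontinuous and sublinear, hence its value at any point of $\overline{\co}^\sigma(\mathscr{S}f)$ is bounded by the supremum of its values on $\mathscr{S}f$ itself. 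Concretely, for any $\alpha$,
\[
 |A_\alpha f(x) - A_\alpha f(y)| \leq \sup_{S \in \mathscr{S}} |Sf(x) - Sf(y)|.
\]

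First I would justify this inequality carefully. Fix $x,y$ and set $c := \sup_{S\in\mathscr{S}}|Sf(x)-Sf(y)|$, which is finite because $\mathscr{S}$ has the e-property (equicontinuity at $x$ gives, for the $\eps$ of interest and a suitable $\delta$, that $c \leq \eps$ once $d(x,y)<\delta$; in any case $c \leq 2\|f\|_\infty \sup_{S}\|S\| \cdot$ — actually just use that for $g$ in the convex hull the bound is inherited). For $g = \sum_{k} a_k S_k f \in \co(\mathscr{S}f)$ with $a_k \geq 0$, $\sum a_k = 1$, we get $|g(x)-g(y)| \leq \sum_k a_k|S_kf(x)-S_kf(y)| \leq c$. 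Thus the functional $g \mapsto |g(x)-g(y)|$ is bounded by $c$ on $\co(\mathscr{S}f)$, and since $\{g : |g(x)-g(y)| \leq c\}$ is $\sigma$-closed (intersection of preimages of the closed disc of radius $c$ under the continuous functionals $g\mapsto g(x)$ scaled appropriately — more precisely it is $\{g : \applied{g}{\delta_x - \delta_y} \in c\overline{\mathbb{D}}\}$ when we also track the phase, or simply note $g \mapsto g(x)-g(y)$ is $\sigma$-continuous so the sublevel set of its modulus is closed), the bound passes to $\overline{\co}^\sigma(\mathscr{S}f)$, which by (AS2) contains every $A_\alpha f$.

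Given the displayed inequality, the conclusion is immediate: let $x \in E$ and $\eps > 0$. Since $\mathscr{S}$ has the e-property, there is $\delta > 0$ such that $|Sf(x)-Sf(y)| \leq \eps$ for all $S \in \mathscr{S}$ whenever $d(x,y)<\delta$; taking the supremum over $S$ and invoking the inequality above yields $|A_\alpha f(x) - A_\alpha f(y)| \leq \eps$ for all $\alpha \in \Lambda$ whenever $d(x,y)<\delta$. Hence $\{A_\alpha f : \alpha\in\Lambda\}$ is equicontinuous, and since $f \in \mathrm{Lip}_b(E,d)$ was arbitrary, $(\mathscr{S},\mathscr{A})$ has the e-property. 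I do not anticipate a real obstacle here; the only point requiring a little care is the transfer of the pointwise bound from $\co(\mathscr{S}f)$ to its $\sigma$-closure, which rests solely on the $\sigma$-continuity of the evaluation functionals $\delta_x, \delta_y \in \mathscr{M}(E)$ — a consequence of $\mathscr{M}(E)$ being the dual of $(C_b(E),\sigma)$.
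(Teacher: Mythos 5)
Your proof is correct and follows essentially the same route as the paper: both arguments rest on (AS2) together with the $\sigma$-continuity of the functional $g\mapsto \dual{g}{\delta_x-\delta_y}$, so that the convexity bound $|g(x)-g(y)|\leq \sup_{S\in\mathscr{S}}|Sf(x)-Sf(y)|$ on $\co(\mathscr{S}f)$ passes to the $\sigma$-closure containing $A_\alpha f$. The paper phrases this as an $\eps$-approximation of $A_\alpha f$ by an element of $\co(\mathscr{S}f)$ against $\delta_x-\delta_y$ (yielding $2\eps$ instead of your $\eps$), but the content is identical.
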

\begin{proof}
Let $f\in\mathrm{Lip}_b(E,d)$, $x\in E$ and $\eps>0$. By assumption, there exists $\delta>0$ such that
$\vert (Sf)(x) - (Sf)(y) \vert <\eps$ for all $S\in\mathscr{S}$ whenever $d(x,y)<\delta$.
Fix such a $y$.

By (AS2), given $\alpha \in \Lambda$ 
we find a function $g_y := \sum_{k=1}^n a_k S_kf \in \mathrm{co}(\mathscr{S}f)$
such that $| \dual{A_\alpha f -g_y}{\delta_x - \delta_y}| \leq \eps$. It follows that 
\begin{align*}
 |A_\alpha f (x) - A_\alpha f(y)| & \leq  |\dual{A_\alpha f -g_y}{\delta_x -\delta_y}| + |\dual{g_y}{\delta_x -\delta_y}|\\
& \leq  \eps + \sum_{k=1}^n a_k |S_kf(x) - S_kf(y)| \leq 2 \eps\, .
\end{align*}
Since $\eps$ and $\alpha$ where arbitrary, $\{A_\alpha f : \alpha \in \Lambda\}$ is equicontinuous.
Thus, $(\mathscr{S},\mathscr{A})$ has the e-property.
\end{proof}

Example \ref{ex:noconvergence} shows that if $\mathscr{S}$ does not have the e-property, there might be $\mathscr{A}$ 
and $\tilde{\mathscr{A}}$ such that $(\mathscr{S},\mathscr{A})$ has the
e-property whereas $(\mathscr{S}, \tilde{\mathscr{A}})$ does not have the e-property.\medskip 

\begin{rem}
\label{r.equicontcompact}
In view of the Arzel\`a-Ascoli theorem \cite[Thm 3.6]{khan1979} and (AS1), 
an average scheme $(\mathscr{S},\mathscr{A})$ has the (eventual) e-property
if and only if for every $f\in \mathrm{Lip}_b(E,d)$ the set 
$\{A_\alpha f: \alpha \in \Lambda\}$ (resp.\ $\{A_\alpha f: \alpha \geq \alpha_0\}$)
is relatively $\beta_0$-compact, equivalently, 
the set is relatively compact in the compact-open topology.

Thus, an average scheme with eventual e-property satisfies Hypothesis \ref{c.betastar}.
\end{rem}

We next discuss the strong Feller property which also implies Hypothesis \ref{c.betastar}.
Let us recall that an operator $T \in \cL (C_b(E), \sigma)$ is a kernel operator and thus has a unique extension to
an operator on $B_b(E)$. 
The operator $T$ is said to be \emph{strong Feller} if this extension maps $B_b(E)$ to $C_b(E)$. It is called \emph{ultra Feller}, if 
the extension maps bounded subsets of $B_b(E)$ to equicontinuous subsets of $C_b(E)$.

\begin{prop}
Let $\mathscr{S}\subset \cL(C_b(E),\sigma)$ be a semigroup such that some
 operator $S\in\mathscr{S}$ is strong Feller. Then every 
average scheme $(\mathscr{S},\mathscr{A})$ satisfies Hypothesis~\ref{c.betastar}.
\end{prop}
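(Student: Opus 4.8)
The plan is to show that a single strong Feller operator in the semigroup forces the averages $A_\alpha f$ to be eventually relatively $\beta_0$-compact when $f$ is Lipschitz, and then invoke Remark~\ref{r.equicontcompact}. The first step is to recall a standard fact from the theory of kernel operators: the composition of two strong Feller operators is ultra Feller (this is the classical Dynkin--Getoor type result; it holds because a strong Feller kernel operator, viewed through its kernel $k(x,\cdot)$, depends continuously on $x$ in total variation once composed with another strong Feller operator). Since $\mathscr{S}$ is a semigroup containing the strong Feller operator $S$, the operator $S^2 = S\cdot S \in \mathscr{S}$ is ultra Feller. Replacing $S$ by $S^2$ if necessary, we may therefore assume that $S\in\mathscr{S}$ is itself ultra Feller.

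Next I would use property (AS3): for the fixed operator $S$ we have $A_\alpha(S-I)\to 0$ and hence $A_\alpha S - A_\alpha \to 0$ in operator norm. Writing $A_\alpha f = A_\alpha S f + (A_\alpha - A_\alpha S)f$, the second term tends to $0$ in the sup-norm, uniformly enough to be harmless. For the main term $A_\alpha S f$: by (AS2), $A_\alpha S f \in \overline{\mathrm{co}}^\sigma(\mathscr{S}Sf)$, and every element of $\mathscr{S}Sf$ has the form $T S f$ with $T\in\mathscr{S}$, which since $S$ is ultra Feller lies in the equicontinuous (hence, together with the uniform bound (AS1), relatively $\beta_0$-compact) set $\{T S f : T\in\mathscr{S}\}$. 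Here I would need that the $\beta_0$-closed convex hull of a relatively $\beta_0$-compact, norm-bounded set is again relatively $\beta_0$-compact: on norm-bounded sets $\beta_0$ agrees with the compact-open topology, and an equicontinuous uniformly bounded family has compact closure in the compact-open topology (Arzel\`a--Ascoli), whose closed convex hull is still compact-open-compact and norm-bounded, hence $\beta_0$-compact. Thus $A_\alpha S f$ lies in a fixed relatively $\beta_0$-compact set for all $\alpha$.

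Finally I would assemble these pieces: $A_\alpha f$ differs from an element of a fixed relatively $\beta_0$-compact set by something tending to $0$ in sup-norm (hence in $\beta_0$), so the net $(A_\alpha f)$ has all its subnets admitting $\beta_0$-convergent subnets; that is, $(A_\alpha f)$ clusters in $(C_b(E),\beta_0)$. Since $f\in\mathrm{Lip}_b(E,d)$ was arbitrary, Hypothesis~\ref{c.betastar} holds. The main obstacle is the first step --- justifying that the presence of one strong Feller operator in the \emph{semigroup} $\mathscr{S}$ yields an ultra Feller operator in $\mathscr{S}$; this relies on the composition theorem for strong Feller kernel operators, which should either be cited or proved via the total-variation continuity of $x\mapsto (S^* \delta_x)$ restricted to the range of a strong Feller operator. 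Everything after that is the routine combination of (AS1)--(AS3), Arzel\`a--Ascoli, and the coincidence of $\beta_0$ with the compact-open topology on bounded sets.
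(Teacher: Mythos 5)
Your opening step (the composition of two strong Feller operators is ultra Feller, so $S^2\in\mathscr{S}$ is ultra Feller) is exactly the paper's, and your closing step (a main term lying in a fixed relatively $\beta_0$-compact set plus a correction tending to $0$ in norm forces clustering) is also sound. The gap is in the middle: you put the smoothing operator on the wrong side. Your main term is $A_\alpha Sf$, whose elements you control via (AS2) by the set $\mathscr{S}(Sf)=\{T(Sf):T\in\mathscr{S}\}$, and you claim this set is equicontinuous ``since $S$ is ultra Feller.'' Ultra Fellerness of $S$ says that $S$ maps norm-bounded sets to equicontinuous sets, i.e.\ it controls families of the form $\{Sg:g\in B\}$ with $S$ applied \emph{last}. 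It says nothing about $\{T(Sf):T\in\mathscr{S}\}$, where arbitrary semigroup elements act \emph{after} $S$: composing with a fixed continuous function and then with increasingly oscillatory operators $T$ destroys equicontinuity (e.g.\ $T_n f=f\circ\varphi_n$ with $\varphi_n(x)=nx$ on $\R$ forms a semigroup for which $\{T_n(Sf)\}$ is not equicontinuous for nonconstant $Sf$). A secondary problem with the same step is that even with the order corrected you would need $\{Tf:T\in\mathscr{S}\}$ to be norm-bounded to feed it into the ultra Feller operator, and (AS1) only bounds the averages $A_\alpha$, not the semigroup $\mathscr{S}$.

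The repair is the paper's choice of main term: consider $S^2A_\alpha f$ instead of $A_\alpha Sf$ (or $A_\alpha S^2 f$). The set $\{A_\alpha f:\alpha\in\Lambda\}$ is norm-bounded by (AS1), so its image under the ultra Feller operator $S^2$ is uniformly bounded and equicontinuous, hence relatively $\beta_0$-compact by Arzel\`a--Ascoli and the coincidence of $\beta_0$ with the compact-open topology on bounded sets; and the correction $(S^2-I)A_\alpha f\to 0$ in norm is supplied by the \emph{other} identity in (AS3), namely $\lim_\alpha (S-I)A_\alpha x=0$, rather than the one you invoked. With that substitution your assembly of the two pieces goes through verbatim, and you no longer need (AS2) or the convex-hull compactness discussion at all. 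Incidentally, this argument gives the conclusion for every $f\in C_b(E)$, not just Lipschitz $f$.
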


\begin{proof}
Let $f\in C_b(E)$ and $(\mathscr{S},\mathscr{A})$ be an average scheme.
Since $S$ is strong Feller, the operator $S^2\in\mathscr{S}$ is ultra Feller \cite[\S 1.5]{revuz1975}.
Hence, by (AS1) and the Arzel\`a-Ascoli theorem \cite[Thm 3.6]{khan1979}, the set
$\{ S^2 A_\alpha f : \alpha \in \Lambda \}$ is relatively $\beta_0$-compact. Thus,
every subnet of $(A_\alpha f)$ has a subnet $(A_{\alpha(\beta)} f)_{\beta\in J}$ 
such that $(S^2 A_{\alpha(\beta)})_{\beta\in J}$ converges in $(C_b(E),\beta_0)$.
By (AS3), 
\[ S^2 A_{\alpha(\beta)}f - A_{\alpha(\beta)}f \to 0 \]
in norm and thus with respect to the strict topology which is coarser.
\end{proof}

\medskip

Now we return to our main line of study.
Besides Hypothesis \ref{c.betastar}, we will impose further assumptions. First of all, we assume
that $\mathscr{S}$ is a \emph{Markovian semigroup}, i.e.\  every operator
$S \in\mathscr{S}$ is Markovian by which we mean that $S$ is positive and $S\one =\one$. 
If $k_S$ denotes the kernel associated with $S$, this is equivalent with the requirement that $k_S(x,\cdot)$ is a 
probability measure for all $x \in E$ and $S \in\mathscr{S}$.
Since the applications we have in mind for our theory concern transition semigroups of Markov chains or Markov 
processes, this assumption is rather natural.
We will call an average scheme $(\mathscr{S}, \mathscr{A})$
\emph{Markovian} if $\mathscr{S}$ is Markovian.
It follows from (AS2) that this implies that every operator $A_\alpha$ is Markovian, too.

Second, we assume that the directed index set $\Lambda$ of the averages $(A_\alpha)_{\alpha\in\Lambda}$ 
in this section has a cofinal subsequence.
This holds, for instance, in the classical situations
of Examples \ref{ex.cesaro}, \ref{ex.abel} and  \ref{ex.integrable}.
\medskip

We start with two auxiliary lemmas on $\beta_0$-equicontinuous operators.
	Let us recall that a family $\mathscr{T}$ of linear operators on a locally convex
	space $(X,\tau)$ is called \emph{$\tau$-equicontinuous}, if for every
	$\tau$-continuous seminorm $p$, there exists a $\tau$-continuous seminorm $q$ such that
	$p(Tx)\leq q(x)$ for every $T\in \mathscr{T}$ and $x\in X$.

In the case where $(X,\tau) = (C_b(E),\beta_0)$, we have the following characterization
of $\beta_0$-equicontinuous operators, see \cite[Prop 4.2]{kunze2009}. 
A family $\mathscr{T}\subset \cL(X,\beta_0)$ is $\beta_0$-equicontinuous
if and only if for every compact set $K\subset E$ and every $\eps>0$ there exists a compact set 
$L\subset E$ such that $\vert p_T \vert (x, E\backslash L) \leq \eps$
for all $x\in K$ and $T\in \mathscr{T}$ where $p_T$ denotes the kernel of $T$.

In what follows we will use that a family of Borel measures on $E$ is relatively $\sigma'$-compact 
if and only if it is tight and uniformly bounded in the variation norm,
cf.\ Theorems 8.6.7. and 8.6.8. of \cite{bogachev2007}.
Here, a family $\mathscr{F}\subset\mathscr{M}(E)$ of Borel measures is called \emph{tight} if for all $\eps>0$
there exists a compact set $K\subset E$ such that $\vert \mu\vert(E\backslash K)<\eps$ for all $\mu\in\mathscr{F}$.

\begin{lem}
\label{l.beta0equicont}
	Let $\{ T_j : j\in J \} \subset \cL(C_b(E),\sigma)$ be a family of Markovian operators
	that has the e-property.
	Suppose that the family $\{ p_j (x, \cdot ) : j \in J \}$ is
	tight for all $x\in E$ where $p_j$ denotes the kernel associated with $T_j$.
	Then the operators $\{ T_j : j \in J\}$ are $\beta_0$-equicontinuous.
\end{lem}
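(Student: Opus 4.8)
The plan is to use the characterization of $\beta_0$-equicontinuity recalled just before the statement: the family $\{T_j : j \in J\}$ is $\beta_0$-equicontinuous if and only if for every compact $K \subset E$ and every $\eps > 0$ there is a compact set $L \subset E$ such that $|p_j|(x, E \setminus L) \leq \eps$ for all $x \in K$ and all $j \in J$. Since the operators are Markovian, $|p_j|(x,\cdot) = p_j(x,\cdot)$ is a probability measure, so the task reduces to showing \emph{uniform tightness over $j \in J$ and over $x$ in a compact set $K$}: given $K$ and $\eps$, produce one compact $L$ with $p_j(x, E \setminus L) < \eps$ for all $x \in K$, $j \in J$.

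The hypotheses give us two ingredients: (a) for each fixed $x \in E$, the family $\{p_j(x,\cdot) : j \in J\}$ is tight; and (b) the e-property, i.e.\ for each $f \in \mathrm{Lip}_b(E,d)$ the orbit $\{T_j f : j \in J\}$ is equicontinuous. The idea is to upgrade the pointwise tightness in (a) to tightness uniform over $K$ by exploiting the equicontinuity in (b). First I would fix $\eps > 0$ and a compact $K$. Using a standard construction, for each $\rho > 0$ let $f_\rho(y) := (1 - \rho^{-1}\dist(y, B(x_0, R)))^+$ type bump functions — more precisely, for a compact set $C$ and $r>0$ I would take the Lipschitz function $g_{C,r}(y) := \max(0, 1 - r^{-1}\dist(y,C))$, which equals $1$ on $C$, is supported in the $r$-neighborhood of $C$, and has Lipschitz constant $1/r$. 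Then $T_j(1 - g_{C,r})(x) = p_j(x, E) - \int g_{C,r}\, dp_j(x,\cdot) \geq p_j(x, E \setminus \overline{C^r})$ bounds the mass outside a slightly enlarged set, while $T_j(1-g_{C,r})(x)$ is small when $p_j(x,\cdot)$ puts most of its mass on $C$.

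The technical heart is a covering argument. Cover $K$ by finitely many balls $B(x_1,\delta), \dots, B(x_m,\delta)$ where $\delta$ is chosen (using the e-property applied to the finitely many bump functions that will appear) so that $|T_j h(x) - T_j h(x_i)| < \eps/3$ for all $j$, all $x \in B(x_i,\delta)$, and all the relevant Lipschitz functions $h$; simultaneously, for each center $x_i$, tightness (a) yields a compact $C_i$ with $p_j(x_i, E \setminus C_i) < \eps/3$ for all $j \in J$. Set $C := \bigcup_{i=1}^m C_i$ and let $L := \overline{C^1}$ (the closed $1$-neighborhood of $C$, which is compact since $E$ is complete and $C$ is compact — here completeness of $d$ is used). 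Then for $x \in K$, pick $i$ with $x \in B(x_i,\delta)$; using $h = 1 - g_{C,1} = 1 - g_{C,1}$, which is $\leq \one_{E \setminus C}$ and Lipschitz with constant $1$, we get
\[
p_j(x, E \setminus L) \leq T_j h(x) \leq T_j h(x_i) + \frac{\eps}{3} \leq p_j(x_i, E \setminus C) + \frac{\eps}{3} \leq p_j(x_i, E \setminus C_i) + \frac{\eps}{3} < \eps,
\]
using monotonicity of the integral and $E \setminus C \supset E \setminus L$ reversed appropriately — one must be careful that $h \geq \one_{E\setminus L}$ since $g_{C,1} < 1$ off $L$, so $T_jh(x) = \int h\,dp_j(x,\cdot) \geq p_j(x,E\setminus L)$, and $h \leq \one_{E\setminus C}$ gives $T_jh(x_i) \leq p_j(x_i, E\setminus C)$. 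This establishes the uniform tightness, hence $\beta_0$-equicontinuity. The main obstacle I anticipate is the bookkeeping to make the single Lipschitz function $h = 1 - g_{C,1}$ simultaneously handle all centers (the e-property is for a fixed $f$, so I apply it once to this one $h$ after $C$ is constructed — which forces a two-stage choice of $\delta$: actually, since $h$ depends on $C$ which depends on the $C_i$ which are chosen independently of $\delta$, the order is: choose $C_i$ from (a), form $C$ and $h$, \emph{then} use the e-property for this $h$ to pick $\delta$ and the covering). Getting this ordering right is the only delicate point; everything else is routine.
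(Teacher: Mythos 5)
Your overall strategy is the same as the paper's: reduce $\beta_0$-equicontinuity to uniform tightness of $\{p_j(x,\cdot): j\in J,\ x\in K\}$ via the kernel characterization, and transfer the pointwise tightness from finitely many centers to all of $K$ by sandwiching indicator functions between Lipschitz bumps and invoking the e-property. However, there is a genuine gap at the final step: the closed $1$-neighborhood $\overline{C^1}$ of a compact set $C$ in a Polish space is \emph{not} compact in general — completeness of $d$ does not help. For instance, take $E=\ell^2$ with its usual metric and $C=\{0\}$; then $\overline{C^1}$ is the closed unit ball, which is not compact. So your set $L$ need not be compact, and the estimate $p_j(x,E\setminus L)<\eps$ does not verify the condition in the $\beta_0$-equicontinuity criterion, which requires a \emph{compact} $L$. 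What you have actually proved is that for every $\eps>0$ there is a compact set $C$ with $p_j(x,C^\eps)\geq 1-\eps$ (mass concentrated on an $\eps$-enlargement of a compact set, after rescaling the width of the bump). This does imply tightness, but that implication is a nontrivial fact about complete separable metric spaces; the paper closes exactly this gap by citing the Ethier--Kurtz characterization of tightness \cite[Thm 3.2.2]{ethier1986}. You need either that citation or an explicit iteration (run your construction with $\eps_n=\eps 2^{-n}$ and neighborhood radii $r_n\to 0$, and take $L=\bigcap_n \overline{C_n^{r_n}}$, which is closed and totally bounded, hence compact).

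A second, more minor, problem is the circularity in your ordering of choices: your $\delta$ comes from the e-property applied to $h$, $h$ is built from $C=\bigcup C_i$, the $C_i$ are attached to the cover centers $x_i$, and the $x_i$ are the centers of a $\delta$-cover of $K$ — so $\delta$ depends on itself. Your proposed resolution (``choose $C_i$ first'') does not break the loop, because the $C_i$ cannot be chosen before the centers are known. The paper avoids this by using one bump $f_x$ \emph{per point} $x\in E$, obtaining a radius $\delta_x$ from the e-property applied to $f_x$ (which depends only on $x$), covering $K$ by the balls $B(x,\delta_x)$, extracting a finite subcover, and only then forming $f=\max_k f_{x_k}$, using positivity of $T_j$ to get $T_jf\geq T_jf_{x_k}$. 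You should restructure your argument the same way.
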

\begin{proof}
	Let $K\subset E$ be compact and $\eps>0$. By assumption, for every $x\in E$ there exists 
	a compact set $L_x \subset E$ such that $p_j (x, E\backslash L_x ) \leq \eps$ for all $j\in J$.
	We denote by 
	\[L_x^\eps \coloneqq  \{ x\in E : \dist(x, L_x) < \eps \} \]
	the open $\eps$-neighborhood of $L_x$ and define
	\[ f_x(y) \coloneqq \frac{\dist(x,E\backslash L_x^\eps)}{\dist(x,L_x)+\dist(x,E\backslash L_x^\eps)}.\]
	Then $f_x$ is Lipschitz continuous and $\mathds{1}_{L_x} \leq f_x \leq \mathds{1}_{L_x^\eps}$.
	Since the family $(T_j)$ has the e-property, there exist $\delta_x >0$ such that 
	\[ \vert (T_j f_x)(x) - (T_j f_x)(y) \vert < \eps\]
	for all $j\in J$ whenever $d(x,y) < \delta_x$. By the compactness of $K$, we find 
	$x_1,\dots, x_n \in E$ such that 
	\[ K \subset \bigcup_{k=1}^n \{ y\in E: d(x_k,y) < \delta_{x_k} \}.\]
	Let $L\coloneqq \cup_{k=1}^n L_{x_k}$ and $f(y)\coloneqq \max\{f_{x_1}(y),\dots,f_{x_n}(y)\}$.
	Since $L^\eps = \cup_{k=1}^n L_{x_k}^\eps$  we have $\mathds{1}_{L} \leq f \leq \mathds{1}_{L^\eps}$.
	Now fix an arbitrary $x\in K$ and choose $k\in \{1,\dots,n\}$ such that $d(x,x_k) < \delta_{x_k}$.
	Then we have
	\begin{align*}
		p_j(x,L^\eps) &\geq \int_E f(y) p_j(x,\mathrm{d} y) = (T_j f)(x)
		\geq (T_j f_{x_k})(x_k) -\eps \\
		&=  \int_E f_{x_k} (y) p_j(x_k,\mathrm{d} y) - \eps \geq p_j(x_k,L_{x_k})-\eps \geq 1-2\eps
	\end{align*}
	for every $j\in J$. It follows from \cite[Thm 3.2.2]{ethier1986} that
	the family 
	\[\{p_j(x, \cdot): j\in J,\, x\in K\}\]
	is tight.
	Since $K\subset E$ was arbitrary, the operators $T_j$ are $\beta_0$-equicontinuous by \cite[Prop 4.2]{kunze2009}.
\end{proof}

\begin{lem}
\label{l.orbitsequicont}
Let $\{T_j : j\in J \}\subset \cL(C_b(E),\beta_0)$ be a $\beta_0$-equicontinuous
family of operators with e-property.
Then $\{ T_j f : j\in J\}$ is equicontinuous for all $f\in C_b(E)$.
\end{lem}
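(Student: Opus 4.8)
The plan is to argue by contradiction, using the e-property to reduce to a Lipschitz function and $\beta_0$-equicontinuity to control the resulting approximation error uniformly over the family. Suppose $\{T_jf:j\in J\}$ fails to be equicontinuous at some point $x_0\in E$: then there are $\eps_0>0$, a sequence $y_n\to x_0$ in $E$, and indices $j_n\in J$ with $|T_{j_n}f(x_0)-T_{j_n}f(y_n)|\geq \eps_0$ for all $n$. The key observation — which is what lets the argument work even though a Polish space need not be locally compact — is that the set $K\coloneqq\{x_0\}\cup\{y_n:n\in\N\}$ is compact, so the kernel description of $\beta_0$-equicontinuous families from \cite[Prop 4.2]{kunze2009} is available for this particular $K$.

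First I would record two consequences of $\beta_0$-equicontinuity on $K$, writing $p_j$ for the kernel of $T_j$: for every $\delta>0$ there is a compact set $L=L_\delta\subset E$ with $|p_j|(x,E\setminus L)\leq\delta$ for all $j\in J$ and $x\in K$; and, since $\mathds 1_K\in\mathscr F_0(E)$, the seminorm $h\mapsto\sup_{x\in K}|h(x)|$ is $\beta_0$-continuous, so testing it against the unit ball of $C_b(E)$ and using that continuous test functions compute the total variation of a Borel measure on a Polish space yields a finite uniform bound $c\coloneqq\sup_{j\in J}\sup_{x\in K}|p_j|(x,E)<\infty$.

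Next I would approximate: fixing $\delta>0$ and the corresponding compact set $L=L_\delta$, the restriction $f|_L$ is uniformly continuous, so there is a bounded Lipschitz function $g\in\mathrm{Lip}_b(E,d)$ with $\|g\|_\infty\leq\|f\|_\infty$ and $\sup_{x\in L}|f(x)-g(x)|\leq\delta$ — for instance a suitable truncation of the inf-convolution $x\mapsto\inf_{z\in L}(f(z)+n\,d(x,z))$ with $n$ large. Splitting the integral over $L$ and over $E\setminus L$ gives $|T_j(f-g)(x)|\leq c\delta+2\|f\|_\infty\delta$ for every $j\in J$ and every $x\in K$. Since $g$ is Lipschitz, $\{T_jg:j\in J\}$ is equicontinuous at $x_0$ by the e-property, so $|T_jg(x_0)-T_jg(y_n)|\leq\delta$ for all $j$ once $n$ is large. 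Combining these with the triangle inequality $|T_{j_n}f(x_0)-T_{j_n}f(y_n)|\leq|T_{j_n}(f-g)(x_0)|+|T_{j_n}g(x_0)-T_{j_n}g(y_n)|+|T_{j_n}(g-f)(y_n)|$ and using $x_0,y_n\in K$, one obtains $\eps_0\leq(2c+4\|f\|_\infty+1)\delta$ for all large $n$; as $c$ and $\|f\|_\infty$ are independent of $\delta$, letting $\delta\to 0$ gives the contradiction.

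The hard part is not any individual estimate but securing a single compact set $L$ and a single constant $c$ that serve all operators $T_j$ and all the relevant base points at once; this is exactly the content of $\beta_0$-equicontinuity (uniform tightness plus uniform variation bound on compacta), and it is what forces the hypothesis to be stated for a $\beta_0$-equicontinuous family rather than for an arbitrary family with the e-property, the latter only yielding equicontinuity of $\{T_jf\}$ for Lipschitz $f$. A minor technical point, handled by the truncation, is to keep the sup-norm of the Lipschitz approximant of $f|_L$ under control so that the tail estimate on $E\setminus L$ stays bounded by $2\|f\|_\infty\delta$.
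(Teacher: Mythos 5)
Your argument is correct, and its skeleton coincides with the paper's: write $f=g+(f-g)$ with $g\in\mathrm{Lip}_b(E,d)$, handle $T_jg$ by the e-property, and handle $T_j(f-g)$ uniformly in $j$ on the compact set $K=\{x_0\}\cup\{y_n:n\in\N\}$ by $\beta_0$-equicontinuity. Where you genuinely diverge is in the key input used to produce $g$ and to estimate the error term. The paper takes the abstract route: it invokes the Stone--Weierstrass theorem for the strict topology to get that $\mathrm{Lip}_b(E,d)$ is $\beta_0$-dense in $C_b(E)$, chooses the $\beta_0$-continuous seminorm $q$ dominating $h\mapsto\sup_K|T_jh|$, and picks $g$ with $q(f-g)\leq\eps$, so the error estimate is a one-line consequence of the definition of equicontinuity. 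You instead unwind $\beta_0$-equicontinuity into its kernel form (uniform tightness $|p_j|(x,E\setminus L)\leq\delta$ on $K$ plus the uniform variation bound $c$, the latter correctly extracted from norm-continuity of the dominating seminorm and the norming property of the pair), approximate $f$ by a McShane-type Lipschitz extension only on $L$ with a sup-norm truncation, and split the integral over $L$ and $E\setminus L$. Your version is more hands-on and avoids the strict-topology Stone--Weierstrass theorem entirely, at the price of an explicit construction and of re-deriving the uniform bound $c$; the paper's version is shorter but leans on the density result of Fremlin--Garling--Haydon. Both are complete; your handling of the $\delta$-dependence (with $K$ and $c$ fixed once and for all, and $L$, $g$, $n_0$ allowed to depend on $\delta$) is sound.
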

\begin{proof}
	Since $\mathrm{Lip}_b(E,d)$ is a subalgebra of $C_b(E)$ which separates the points of $E$,
	it follows from the Stone-Weierstrass theorem
	that $\mathrm{Lip}_b(E,d)$ is dense in $(C_b(E),\beta_0)$, see \cite[Theorem 11]{fremlin1972}.
	Fix $f\in C_b(E)$ and $x_n,\, x\in E$ with $\lim x_n = x$. We show that $(T_j f)(x_n)$
	converges to $(T_j f)(x)$, uniformly in $j\in J$.
	This proves the equicontinuity of $\{ T_j f : j\in J\}$ in $x$.

	Consider the compact set $K\coloneqq\{x\}\cup \{x_n : n\in\N \}$ and the associated seminorm
	$p(h) \coloneqq \sup \{ \vert h(x) \vert : x\in K\} = \|h\varphi\|_\infty$ for $\varphi = \one_K$.
 Since the family $(T_j)$ is $\beta_0$-equicontinuous, 
 there exists a $\beta_0$-continuous seminorm $q: C_b(E) \to [0,\infty)$
	such that $p(T_j h) \leq q(h)$ for all $h\in C_b(E)$ and $j\in J$.
	Now given $\eps>0$, pick $g\in \mathrm{Lip}_b(E,d)$ such that
	$q(f-g) \leq \eps$. Since the family $\{ T_j g : j\in J\}$ is equicontinuous, 
there exists $n_0 \in\N$ such that
	\[ \vert (T_j g)(x_n) - (T_j g)(x) \vert \leq \eps\]
	for all $n\geq n_0$ and all $j\in J$. This implies that
	\[ 
		\vert (T_j f)(x_n) - (T_j f)(x) \vert 
		\leq 2q(f-g) + \vert (T_j g)(x_n) - (T_j g)(x) \vert \leq 2\eps + \eps
	\]
	for all $n\geq n_0$ and $j\in J$.
\end{proof}

Now we prove the main result of this section.

\begin{thm}\label{t.eerg}
Let $(\mathscr{S},\mathscr{A})$ be a Markovian average scheme on $(C_b(E),\mathscr{M}(E))$
that satisfies Hypothesis \ref{c.betastar} and
suppose that there exists an increasing cofinal sequence in $\Lambda$. Then the following assertions
are equivalent.
\begin{enumerate}[(i)]
	\item $(\mathscr{S},\mathscr{A})$ is weakly ergodic and $\beta_0\minus\lim_\alpha A_\alpha f = P f$
	for all $f\in C_b(E)$ where $P$ is the ergodic projection.
	\item For every $x\in E$ the net $(A'_\alpha \delta_x)$ has a $\sigma'$-cluster point.
	\item $\fix(\mathscr{S}')$ separates $\fix(\mathscr{S})$.
	\item $\mathscr{M}(E) = \fix(\mathscr{S}') \oplus \overline{\lh}^{\sigma'} (I-\mathscr{S}')$.
\end{enumerate}
\end{thm}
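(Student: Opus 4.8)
The plan is to prove the cycle of implications (i) $\Rightarrow$ (ii) $\Rightarrow$ (iii) $\Rightarrow$ (iv) $\Rightarrow$ (i), leaning on Theorem~\ref{t.erg} and Proposition~\ref{p.weakerg}(b) for the ``easy'' parts and using the $\beta_0$-equicontinuity lemmas for the hard one. The implication (i) $\Rightarrow$ (ii) is immediate: weak ergodicity means $A'_\alpha\delta_x$ is $\sigma'$-convergent, hence has a $\sigma'$-cluster point. For (ii) $\Rightarrow$ (iii), fix $0\neq\mu\in\fix(\mathscr{S}')$; since $C_b(E)$ is norming there is $f$ with $\dual{f}{\mu}\neq0$, and by the Markov property we may take $f\in\mathrm{Lip}_b(E,d)$ (Lipschitz functions are $\beta_0$-dense by Stone--Weierstrass, and $\dual{\cdot}{\mu}$ is $\beta_0$-continuous). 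Writing $\dual{f}{\mu}=\int_E\dual{f}{A'_\alpha\delta_x}\,d\mu(x)=\int_E A_\alpha f(x)\,d\mu(x)$ and using Hypothesis~\ref{c.betastar} together with a diagonal/dominated-convergence argument along the cofinal sequence, one produces a $\beta_0$-cluster point $g$ of $(A_\alpha f)$ which by Lemma~\ref{l.1} lies in $\fix(\mathscr{S})$ and still satisfies $\dual{g}{\mu}\neq0$; this is where the cofinal sequence is used to pass from clustering to an honest limit of a subsequence so that bounded convergence applies against $\mu$. The implication (iii) $\Rightarrow$ (iv) is exactly Lemma~\ref{l.direct} applied on the dual pair $(\mathscr{M}(E),C_b(E))$ once one also checks that $\fix(\mathscr{S})$ separates $\fix(\mathscr{S}')$ — but that follows from (iii) by the symmetric half of Lemma~\ref{l.direct}/the separation argument, or can be obtained a posteriori.

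The substance is (iv) $\Rightarrow$ (i), and the proof should go as follows. Assuming the decomposition $\mathscr{M}(E)=\fix(\mathscr{S}')\oplus\overline{\lh}^{\sigma'}(I-\mathscr{S}')$, Proposition~\ref{p.weakerg}(b) applies (with the roles of $X$ and $Y$ interchanged): it gives a bounded projection $P'$ onto $\fix(\mathscr{S}')$ with $P'=P^*|_{C_b(E)}$ for the projection $P\in\cL(C_b(E),\sigma)$ onto $\fix(\mathscr{S})$, that $\fix(\mathscr{S})$ and $\fix(\mathscr{S}')$ separate each other, and that $\sigma(\mathscr{M}(E),X_0)\minus\lim_\alpha A'_\alpha\mu=P'\mu$ where $X_0=\fix(\mathscr{S})\oplus\overline{\lh}^{\|\cdot\|}\rg(I-\mathscr{S})$. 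To upgrade this to full $\sigma'$-convergence of $A'_\alpha\mu$ — i.e.\ weak ergodicity — and to $\beta_0$-convergence of $A_\alpha f$, I would first show that the net $(A_\alpha)$ restricted to $\mathrm{Lip}_b(E,d)$ is $\beta_0$-equicontinuous. By Remark~\ref{r.equicontcompact}, Hypothesis~\ref{c.betastar} plus (AS1) gives the e-property for $(A_\alpha)$; combined with the Markov property and the tightness of the measures $A'_\alpha\delta_x$ (which must be extracted — the $\sigma(\mathscr{M}(E),X_0)$-convergence to $P'\delta_x$ plus clustering from Hypothesis~\ref{c.betastar} and the Prokhorov criterion, Theorems 8.6.7--8.6.8 of \cite{bogachev2007}, yield relative $\sigma'$-compactness hence tightness), Lemma~\ref{l.beta0equicont} shows $\{A_\alpha:\alpha\in\Lambda\}$ is $\beta_0$-equicontinuous. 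Then Lemma~\ref{l.orbitsequicont} extends the e-property from Lipschitz functions to \emph{all} $f\in C_b(E)$, so by Remark~\ref{r.equicontcompact} again, $(A_\alpha f)$ clusters in $(C_b(E),\beta_0)$ for every $f\in C_b(E)$, not just Lipschitz ones.

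With $\beta_0$ being a consistent (in fact Mackey) topology on $C_b(E)$ that is finer than $\sigma$, Lemma~\ref{l.convergence} then applies with $\tau=\beta_0$: since $\fix(\mathscr{S}')$ separates $\fix(\mathscr{S})$, the $\beta_0$-limit $\beta_0\minus\lim_\alpha A_\alpha f$ exists for every $f\in C_b(E)$, and it must coincide with $Pf$ (it is a $\sigma$-limit, and $P$ was already identified as the $\sigma(C_b(E),X_0)$-limit, which any genuine limit refines). In particular $A_\alpha f\to Pf$ in $\sigma$ for all $f$. To close weak ergodicity we also need $\sigma'$-convergence of $A'_\alpha\mu$ for all $\mu\in\mathscr{M}(E)$: on $\fix(\mathscr{S}')$ it is constant, on $\rg(I-\mathscr{S}')$ it tends to $0$ by (AS3), hence on $\overline{\lh}^{\|\cdot\|}\rg(I-\mathscr{S}')$ by (AS1), and by (iv) these span $\mathscr{M}(E)$ — but the sum is only norm-dense inside a norm-closed subspace, so here one invokes the $\beta_0$-equicontinuity once more: the adjoints $(A'_\alpha)$ are then equicontinuous from $(\mathscr{M}(E),\sigma')$ to itself on bounded sets, which lets the convergence pass from the (norm-dense, then $\sigma'$-dense via Lemma~\ref{l.direct}) subspace to all of $\mathscr{M}(E)$, giving $A'_\alpha\mu\to P'\mu$ in $\sigma'$. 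Thus $(\mathscr{S},\mathscr{A})$ is weakly ergodic with ergodic projection $P$, and the strict convergence $\beta_0\minus\lim_\alpha A_\alpha f=Pf$ holds, which is (i).

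The main obstacle is the extraction of tightness for $\{A'_\alpha\delta_x:\alpha\in\Lambda\}$ from Hypothesis~\ref{c.betastar}: Hypothesis~\ref{c.betastar} only gives clustering of $(A_\alpha f)$ in $\beta_0$ for Lipschitz $f$, and one must turn this, via the identity $\dual{f}{A'_\alpha\delta_x}=A_\alpha f(x)$, the Markov normalization, and the cofinal-sequence hypothesis, into relative $\sigma'$-compactness of the orbit of the Dirac mass — equivalently tightness by Prokhorov — so that Lemma~\ref{l.beta0equicont} can be invoked. Getting the interplay between ``clusters'' and ``relatively compact'' right (this is precisely why Lemma~\ref{l.seqcompact} and the cofinal sequence were isolated) is the delicate bookkeeping step; everything downstream is then a fairly mechanical application of the equicontinuity lemmas and Lemma~\ref{l.convergence}.
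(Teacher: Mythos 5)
Your cycle breaks at (iii) $\Rightarrow$ (iv). Lemma~\ref{l.direct}, applied to the pair $(\mathscr{M}(E),C_b(E))$, only yields that $\fix(\mathscr{S}')+\rg(I-\mathscr{S}')$ is $\sigma'$-\emph{dense} in $\mathscr{M}(E)$ (and, given the other separation, that the sum is direct); it does not give equality, and density genuinely falls short of (iv): Example~\ref{ex.nodecomp} exhibits a Markovian semigroup whose fixed spaces separate each other while $\delta_0\notin\fix(\mathscr{S}')\oplus\overline{\lh}^{\sigma'}\rg(I-\mathscr{S}')$. Within the hypotheses of the theorem (iii) does imply (iv), but only \emph{through} (i); the paper therefore proves (iii) $\Rightarrow$ (i) directly and harvests (iv) from Theorem~\ref{t.erg}(\ref{t.erg.4}). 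Your hard step (iv) $\Rightarrow$ (i) uses from (iv) essentially only the separation of the fixed spaces (via Proposition~\ref{p.weakerg}(b)), so the repair is to run that argument under (iii) alone, replacing the appeal to Proposition~\ref{p.weakerg}(b) by Lemma~\ref{l.convergence} with $\tau=\beta_0$ on $X_1=\mathrm{Lip}_b(E,d)$.

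Two further points. First, your tightness extraction is circular as sketched: you invoke Theorems 8.6.7--8.6.8 of \cite{bogachev2007}, which \emph{characterize} relative $\sigma'$-compactness by tightness, i.e.\ they presuppose exactly what you are trying to establish. The correct route is: clustering (Hypothesis~\ref{c.betastar}) plus separation gives, via Lemma~\ref{l.convergence}, that $\beta_0\minus\lim_n A_{\alpha_n}f$ exists for all Lipschitz $f$; hence the scalar sequences $\applied{f}{A'_{\alpha_n}\mu}$ converge for every nonnegative $\mu$ and every $f\in\mathrm{Lip}_b(E,d)$, and \cite[Cor 8.6.3]{bogachev2007} (convergence against Lipschitz test functions forces tightness of a sequence of nonnegative measures on a Polish space) yields tightness; Prohorov and the fact that $\mathrm{Lip}_b(E,d)$ is convergence determining then give $\sigma'$-convergence of $A'_\alpha\mu$. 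This is also where Markovianity and the cofinal sequence actually enter. Second, your (ii) $\Rightarrow$ (iii) proves the wrong separation: starting from $0\neq\mu\in\fix(\mathscr{S}')$ and producing $g\in\fix(\mathscr{S})$ with $\applied{g}{\mu}\neq 0$ shows that $\fix(\mathscr{S})$ separates $\fix(\mathscr{S}')$, whereas (iii) asserts the converse. The intended argument is the one from Theorem~\ref{t.erg}: for $0\neq f\in\fix(\mathscr{S})$ pick $x$ with $f(x)\neq 0$, note $\applied{f}{A'_\alpha\delta_x}=f(x)$ for all $\alpha$ by Remark~\ref{r.fixedpoint}, and take a $\sigma'$-cluster point of $(A'_\alpha\delta_x)$, which lies in $\fix(\mathscr{S}')$ by Lemma~\ref{l.1}. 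The remaining machinery in your third paragraph (Lemmas~\ref{l.beta0equicont}, \ref{l.orbitsequicont}, Arzel\`a--Ascoli, then Theorem~\ref{t.erg} and Lemma~\ref{l.convergence}) matches the paper and is sound once the tightness step is fixed.
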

\begin{proof}
	By Theorem \ref{t.erg}, (i) implies (ii) -- (iv).
	Moreover, the implication (iv) $\Rightarrow$ (iii) is part of the proof of Proposition \ref{p.weakerg}
	and (iii) follows from (ii) as in the proof of Theorem~\ref{t.erg}.
	Hence, it remains to prove that (iii) implies (i) to complete the proof.
	\smallskip

	Let us assume that
	$\fix(\mathscr{S}')$ separates $\fix(\mathscr{S})$. We denote by
$(\alpha_n) \subset \Lambda$ an increasing cofinal sequence.  As
$(A_\alpha f)$ clusters in $(C_b(E),\beta_0)$, Lemma \ref{l.convergence} yields that
\[ \beta_0\minus\lim_\alpha A_\alpha f = \beta_0\minus\lim_{n\to\infty} A_{\alpha_n} f\in C_b(E) \]
exists for all $f\in \mathrm{Lip}_b(E,d)$.
Fix a non-negative measure $\mu \in \mathscr{M}(E)$. Then the scalar sequence
$\applied{f}{A'_{\alpha_n}\mu}=\applied{A_{\alpha_n}f}{\mu}$
converges as $n\to\infty$ for all $f\in \mathrm{Lip}_b(E,d)$.
Thus, by \cite[Cor 8.6.3]{bogachev2007}, the family $\{ A_{\alpha_n}' \mu : n\in\N\}$
is tight and Prohorov's theorem \cite[Thm 8.6.2]{bogachev2007} implies that, passing to a subsequence,
$(A'_{\alpha_n}\mu)_{n\in\N}$ converges weakly to some measure $\tilde\mu \in \mathcal{M}(E)$.
Altogether, this shows that
\[ \lim_\alpha \applied{f}{A'_\alpha \mu} = \lim_{\alpha} \applied{A_\alpha f}{\mu} 
= \lim_{n\to\infty} \applied{A_{\alpha_n}f}{\mu} = \lim_{n\to\infty} \applied{f}{A'_{\alpha_n}\mu}
= \applied{f}{\tilde\mu}\]
for all $f\in \mathrm{Lip}_b(E,d)$.
Since $E$ is separable, the set $\mathrm{Lip}_b(E,d)$ is convergence determining by \cite[Prop 3.4.4]{ethier1986}, 
hence it follows that $\sigma'\minus\lim_\alpha A'_\alpha \mu = \tilde\mu$.
Decomposing a general measure in positive and negative part yields that 
$\sigma'\minus \lim_\alpha A'_\alpha \mu \in \mathscr{M}(E)$ exists for every $\mu\in\mathscr{M}(E)$.

By Hypothesis \ref{c.betastar} and Lemma \ref{l.seqcompact},
the set $\{A_{\alpha_n} f : n\in\N\}$ is relatively $\beta_0$-compact for 
every $f\in \mathrm{Lip}_b(E,d)$, which implies by the Arzel\`a-Ascoli theorem
(cf.\ Remark \ref{r.equicontcompact})
that the family $\{A_{\alpha_n} : n\in\N\}$ has the e-property.
Now, we infer from Lemma \ref{l.beta0equicont} 
that the operators $\{ A_{\alpha_n} : n\in\N\}$  are $\beta_0$-equicontinuous.
By Lemma \ref{l.orbitsequicont} this implies that the orbits $\{A_{\alpha_n} f : n\in\N\}$ 
 are equicontinuous for all $f\in C_b(E)$.
Using the Arzel\`a-Ascoli theorem again, it follows that
$\{A_{\alpha_n} f : n\in\N\}$ is relatively $\beta_0$-compact for all $f\in C_b(E)$.


Now we conclude from Theorem \ref{t.erg} that the average scheme $(\mathscr{S},(A_{\alpha_n}))$
is weakly ergodic with an ergodic projection $P \in \cL(C_b(E),\sigma)$.
Since $(\alpha_n)$ was arbitrary and $P$ does not depend on the averages $(A_{\alpha_n})$, 
even $(\mathscr{S},\mathscr{A})$
is weakly ergodic. 
Finally, the $\beta_0$-convergence of $(A_\alpha f)$  follows from Lemma \ref{l.convergence}.
\end{proof}

\begin{rem}
Assume that $\Lambda = \CN$ or $\Lambda = [0, \infty)$ in their natural ordering and that $\alpha \mapsto A_\alpha f$ is 
continuous as a map with values in $(C_b(E), \beta_0)$. If $\Lambda = \CN$, this is always the case, for $\Lambda = [0, \infty)$ 
this is true for the Ces\`aro averages $A_t$ of a semigroup  $\mathscr{S} =(S(t))_{t \geq 0}$ on $(C_b(E), \mathscr{M}(E))$ 
which has $\beta_0$-continuous orbits.

If $(\mathscr{S}, \mathscr{A})$ is weakly mean ergodic, and $A_\alpha f$ converges to $Pf$ with respect to $\beta_0$, 
then the means $\{A_\alpha : \alpha \in \Lambda \}$ are $\beta_0$-equicontinuous. Indeed, in this case the function 
$F: \Lambda \cup \{\infty\} \to \cL (C_b(E), \sigma)$, defined by $F(\alpha ) = A_\alpha$ for $\alpha \neq \infty$ and 
$F(\infty) = P$ is strongly $\beta_0$-continuous, whence the equicontinuity follows from \cite[Lemma 3.8]{kunze2009}.

We are thus in the situation of mean ergodic theorems on locally convex spaces \cite{eberlein1949, sato1978}. Note, however, that 
in Theorem \ref{t.eerg} we do not a priory assume $\beta_0$-equicontinuity since, as the example of the shift semigroup 
shows, Hypothesis \ref{c.betastar} alone does not imply equicontinuity of the averages.
\end{rem}

\begin{rem}
It is immediate that in the situation of Theorem \ref{t.eerg} if $(\mathscr{S}, \mathscr{A})$ is weakly mean ergodic, then 
 any average scheme $(\mathscr{S}, \mathscr{B})$ which satisfies Hypothesis \ref{c.betastar} is also weakly mean ergodic.
\end{rem}

\section{Counterexamples}\label{s.examples}

We conclude this article with a collection of examples that illustrate that the results obtained in 
Section \ref{s.convergence} are optimal.
Our first example shows that even if $A_\alpha x$ $\sigma$-converges for all $x\in X$, it can happen that
on $Y$ the averages $A'_\alpha y$ do not $\sigma'$-converge for some $y\in Y$.
\begin{example}
\label{ex:Pnotcontinuous}
Consider the norming dual pair $(\ell^1, c_0)$ and the power-bounded operator $S: \ell^1 \to \ell^1$, defined by 
$S: (x_1, x_2, x_3, \ldots) \mapsto (x_1 + x_2, x_3, \ldots)$. Then the adjoint operator is given by
$S^* (y_1, y_2, \ldots) = (y_1, y_1, y_2, \ldots)$. 
In particular, $S^*c_0 \subset c_0$ so that $S \in \cL (\ell^1, \sigma)$. Since 
\[
 S^n(x_1,x_2,\ldots) = (\sum_{j=1}^n x_j, x_{n+1}, x_{n+2}, \ldots) 
\]
clearly $\sigma$-converges to $(\sum_{j=1}^\infty x_j, 0,0, \ldots)$, the $\sigma$-limit of the Ces\`aro averages
\[ A_n \bx = \frac{1}{n} \sum_{k=0}^{n-1} S^k \bx \]
exists for all $\bx \in \ell^1$.
However, in this situation the 
ergodic projection $P : \bx \mapsto (\sum_{j=1}^\infty x_j, 0, 0, \ldots)$ does not respect the duality, i.e.\ 
$P \not\in \cL (\ell^1, \sigma)$. Indeed, for $\bx  \in \ell^1$ and $\by \in \ell^\infty = (\ell^1)^*$ we have
\[
 \applied{P\bx}{\by} =y_1\sum_{j=1}^\infty x_1 = \applied{\bx}{y_1\one}
\]
whence $P^*c_0 \not\subset c_0$. By Theorem \ref{t.erg}, the
$\sigma'$-limit of the adjoint averages $A'_n \by$ does not exist for some $\by \in c_0$.
\end{example}

Our next example shows that if $(\mathscr{S},\mathscr{A})$ is an average scheme so that both $X$ and $Y$ can be
decomposed as in (\ref{t.erg.4}) of Theorem \ref{t.erg}, the average scheme is not necessarily weakly ergodic.
In fact, we present two different averages $\mathscr{A}$ and $\tilde{\mathscr{A}}$ for the same semigroup $\mathscr{S}$
such that $(\mathscr{S},\mathscr{A})$ is weakly ergodic whereas for $(\mathscr{S},\tilde{\mathscr{A}})$ only the weaker
convergence properties of Proposition \ref{p.weakerg} (\ref{i.weakerg.b}) hold.

\begin{example}\label{ex:noconvergence}
We consider the set $E = \CZ\cup \{\infty\}$, where every point in $\CZ$ is isolated, whereas the neighborhoods of the 
extra point $\infty$ are exactly the sets which contain a set of the form $\{ n, n+1, \ldots\}\cup\{\infty\}$ for 
some $n\in\CZ$. Note that $E$ is homeomorphic with $\{-n\,:\, n\in\CN\} \cup \{1-\frac{1}{n}\,:\, n\in\CN\} \cup\{1\}$
endowed with the topology inherited from $\CR$, thus $E$ is Polish.

We work on the norming dual pair $(C_b(E),\mathscr{M}(E))$. Note that a function $f : E \to\CR$ is continuous 
if and only if $\lim_{n\to\infty}f(n) = f(\infty)$ and that $\mathscr{M}(E) = \ell^1(E)$. Consider the 
semigroup 
$\mathscr{S}:= \{S^n: n\in \CZ\}$, where
\[
 (Sf)(k) = f(k+1)\quad \mbox{for } k \in\CZ\qquad \mbox{and}\qquad (Sf)(\infty) = f(\infty)\, .
\]
Then $\fix (\mathscr{S}) = \{c\one_E: c \in \CR\}$ and $\fix (\mathscr{S}') = \{ c \delta_\infty: c \in\CR\}$.
In particular, the fixed spaces separate each other and are finite dimensional so that, as a consequence of 
Proposition~\ref{p.weakerg} (a), we have
\[
 C_b(E) = \fix (\mathscr{S}) \oplus \overline{\lh}^\sigma \rg (I -\mathscr{S})
\quad\mbox{and}\quad 
\mathscr{M}(E) = \fix (\mathscr{S}') \oplus \overline{\lh}^{\sigma'} \rg (I -\mathscr{S}').
\]
We now consider the average schemes $A_n$ and $\tilde{A}_n$, defined by
\[
 A_nx \coloneqq \frac{1}{n}\sum_{k=0}^{n-1}S^kx\quad\mbox{and}\quad 
\tilde{A}_nx \coloneqq \frac{1}{n}\sum_{k=0}^{n-1}S^{-k}x.
\]
That $A_n$ and $\tilde{A}_n$ are indeed average schemes is proved as in Example~\ref{ex.cesaro}.
Defining $Y_0 := \fix (\mathscr{S}') \oplus \overline{\lh}^{\|\cdot\|}\rg (I-\mathscr{S}')$, it
follows from Proposition~\ref{p.weakerg} (b) that 
\[\sigma (C_b(E), Y_0) \minus\lim_{n\to\infty}A_nf = \sigma (C_b(E),Y_0)\minus\lim_{n\to\infty}\tilde{A}_nf
 = Pf = f(\infty)\one_E
\]
for all $f\in C_b(E)$. 
Actually, using that $f(n) \to f(\infty)$ as $n\to \infty$ for all $f \in C_b(E)$, it is easy to see that
$A_nf \to f(\infty)\one_E$ pointwise for all $f \in C_b(E)$, hence with respect to $\sigma (C_b(E),\mathscr{M}(E))$.

However, $\tilde A_n f$ does not $\sigma(C_b(E),\mathscr{M}(E))$-converge to $f(\infty)\mathds{1}_E$ for some $f\in C_b(E)$.
Indeed, for $f := \one_{\CN\cup{\{}\infty{\}}} \in C_b(E)$ 
the sequence $\tilde{A}_nf$ converges pointwise to the function $\one_{{\{}\infty\}}$ which is not continuous.

This shows that in Proposition~\ref{p.weakerg} we cannot expect better convergence than with respect to 
$\sigma (X,Y_0)$. On the other hand, this example also shows that even if both $X$ and $Y$ have ergodic decompositions,
it can depend on the average scheme how strong the convergence to the ergodic projection is.

Let us also note that the average scheme $A_n$ has the e-property, whereas $\tilde{A}_n$ does not have the e-property.
For the e-property, the only point of interest is the point $\infty$, as all other points of $E$ are isolated. First note 
that in this case $C_b(E) = \mathrm{Lip}_b(E,d)$. Given $f \in C_b(E)$ and $\eps >0$, we find $n_0$ such that 
$|f(n) - f(\infty)| \leq \eps$ for all $n\geq n_0$. Consequently, we also have $|S^kf(n) - S^kf(\infty)| = |f(n+k) - f(\infty)|
\leq \eps$ for all $n\geq n_0$ and all $k \geq 0$. Thus $|A_kf(n) - A_kf(\infty)| \leq k^{-1}\sum_{j=0}^{k-1}
|f(n+j) -f(\infty)| \leq \eps$ for all $k \in \CN$ and all $n\geq n_0$, i.e.\ $\{A_kf:k\in\CN\}$ is equicontinuous.
On the other hand, $\tilde{A}_n$ cannot have the e-property, since in this case it would follow from Theorem~\ref{t.eerg}
that $\tilde{A}_nf \to Pf$ pointwise, which was seen to be wrong above.
\end{example}

We have seen in Proposition \ref{p.weakerg} (\ref{i.weakerg.a}) that if $\fix(\mathscr{S})$ and $\fix(\mathscr{S}')$ separate 
each other and are finite dimensional, then both $X$ and $Y$ can be decomposed as in (\ref{t.erg.4}) of Theorem \ref{t.erg}.
Our last example shows that this is not true for infinite dimensional fixed spaces.

\begin{example}
\label{ex.nodecomp}
In the following we construct a positive,
contractive and $\sigma$-continuous operator $S$ on the norming dual pair 
$(C_b(E),\mathscr{M}(E))$ such that, for $\mathscr{S}:=\{S^n:n\in\CN_0\}$, we have 
\[ \mathscr{M}(E)\not= \fix(\mathscr{S}') \oplus \overline{\rg}^\sigma(I-\mathscr{S}')\]
while the fixed spaces of $\mathscr{S}$ and $\mathscr{S}'$ separate each other. 

For $n\in\N$ let $K_n\coloneqq \{0,\dots,n\} \times \{1/n\}$ and $K_0 \coloneqq \N_0\times \{0\}$.
On the set $E \coloneqq \bigcup_{n\in\N_0} K_n$
endowed with the topology inherited from $\R^2$,
we consider the continuous mapping $\varphi : E\to E$, given by
\[ \varphi((k,1/n)) \coloneqq \begin{cases}
((k+1), 1/n ) & n\in\N,\, k\in\{0,\dots,n-1\}\\
(0,1/n) & n\in\N,\, k=n \end{cases} \]
and $\varphi(k,0) \coloneqq (k+1,0)$ for all $k\in\N$. Thus on each $K_n$ the map $\varphi$ shifts to the right and for 
$n \neq 0$ the point $(n,\frac{1}{n})$ is mapped to $(0,\frac{1}{n})$, see Figure~\ref{fig.omega}.
\begin{figure}[h!]
	\setlength{\fboxsep}{10pt}
	\fbox{\includegraphics[width=0.5\textwidth]{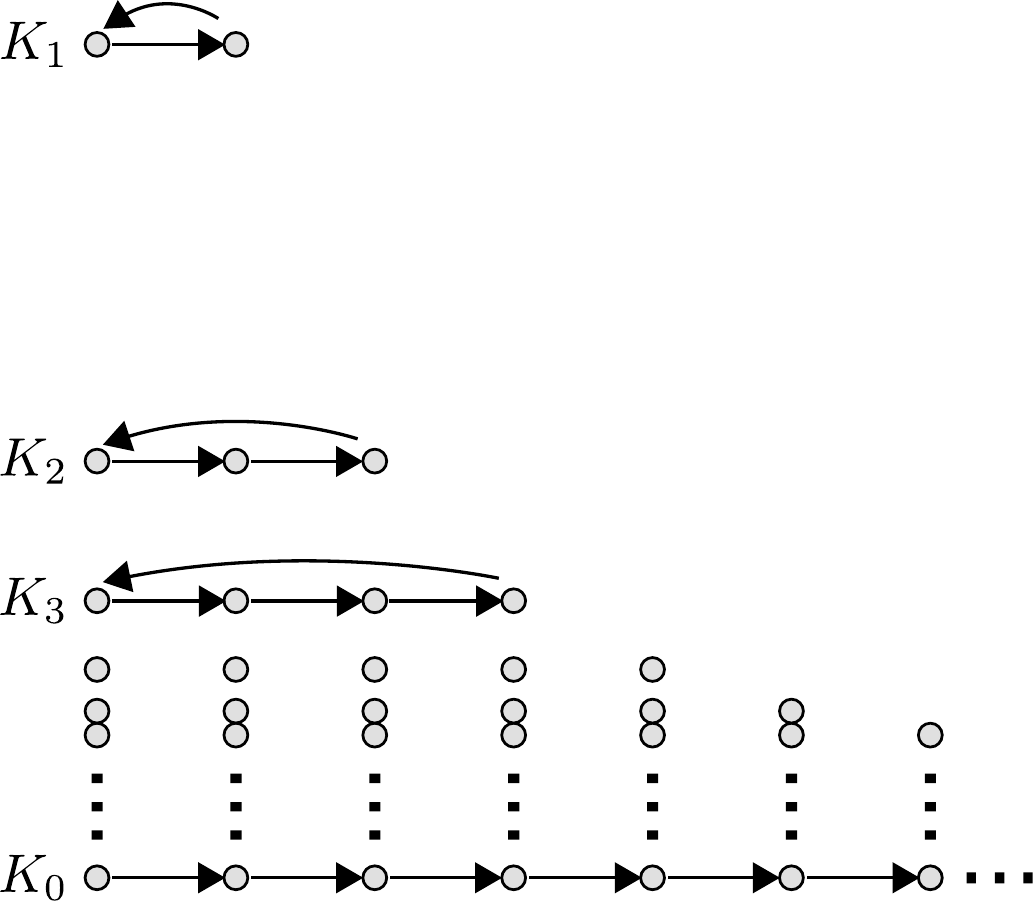}}
	\caption{Transformation of $E$ by the action of $\varphi$}
	\label{fig.omega}
\end{figure}

Let $S$ denote the induced operator on $C_b(E)$, defined as $Sf \coloneqq f\circ \varphi$.
It is easy to see that 
\[ \fix(\mathscr{S}) = \fix(S) = \Big\{ \sum_{n=0}^\infty a_n\one_{K_n}\,:\,
 \lim_{n\to\infty}a_n = a_0\Big\}
\]
On the other hand, 
\[ \fix(\mathscr{S}') = \fix(S') = \Big\{ \sum_{n=1}^\infty a_n\zeta_n\,:\,
 (a_n) \in \ell^1\Big\},
\]
where $\zeta_n$ denotes counting measure on $K_n$ with the normalization $\zeta_n(K_n)=1$.
It thus follows that the fixed spaces separate each other. 

We now show that 
\[ \delta_0 \not\in \fix (\mathscr{S}') \oplus \overline{\lh}^{\sigma'}\rg(I-\mathscr{S}').\]
Aiming for a contradiction, let us assume that there exists a sequence $(a_n)\in \ell^1$ and
a net $(\mu_\alpha)_\alpha \subset \lh\, \rg (I-\mathscr{S}')$, $\sigma'$-converging to $\mu \in \mathscr{M}(E)$, such that
\[ \delta_0 = \sum_{n=1}^\infty a_n \zeta_n + \mu.\]
Since $\one_{K_n}$ is a fixed 
point of $\mathscr{S}$ and $\mu_\alpha$ belongs to $\lh \,\rg (I-\mathscr{S}')$, we have
 $\dual{\one_{K_n}}{\mu_\alpha} = 0$
for all $n\in\CN$ and all $\alpha$. Thus also $\dual{\one_{K_n}}{\mu} = 0$. It follows that
\[ 0 = \applied{\mathds{1}_{K_n}}{\delta_0} = a_n \zeta_n(K_n) + \mu(K_n) = a_n\]
for all $n\in\N$ and hence $\delta_0 = \mu$. Since $\one_E \in \fix{\mathscr{S}}$, the contradiction
\[ 1 = \dual{\one_E}{\delta_0} = \lim_{\alpha} \dual{\one_E}{\mu_\alpha} = 0\]
follows.
\end{example}

\bibliographystyle{abbrv}
\bibliography{analysis}

\end{document}